\newtheorem{theorem}{Theorem}[section]
\newtheorem*{theorem*}{Theorem}
\newtheorem*{remark*}{Remark}
\newtheorem{remark}{Remark}
\newtheorem*{problem*}{Problem}
\newtheorem*{conjecture*}{Conjecture}
\newtheorem{corollary}[theorem]{Corollary}
\newtheorem{prop}[theorem]{Proposition}
\newtheorem{definition}[theorem]{Definition}
\begin{document}

\baselineskip=17pt

\title[Value distribution of $L$-functions]{Value distribution of $L$-functions}

\author[Anup B. Dixit]{Anup B. Dixit}
\address{Department of Mathematics\\ University of Toronto\\ 40 St. George St\\ Canada, ON\\ M5S2E4}
\email{adixit@math.toronto.edu}

\begin{abstract}
In 2002, V. Kumar Murty \cite{Km} introduced a class of $L$-functions, namely the Lindel\"of class, which has a ring structure attached to it. In this paper, we establish some results on the value distribution of $L$-functions in this class. As a corollary, we also prove a uniqueness theorem in the Selberg class.
\end{abstract}

\subjclass[2010]{11M41}

\keywords{L-functions, Selberg class, Lindelof class, value distribution}

\date{}
\maketitle
\section{Introduction}

In 1992, Selberg \cite{Slb} formulated a class of $L$-functions, which can be regarded as a model for $L$-functions originating from arithmetic objects. The value distribution of such $L$-functions has been extensively studied  \cite{Std1} in recent times. The study of value distribution is concerned with the zeroes of $L$-functions and more generally, with the set of pre-images $L^{-1}(c):=\{s\in\mathbb{C}: L(s) = c\}$ where $c$ is any complex number, which Selberg called the $c$-values of $L$. 

For any two meromorphic functions $f$ and $g$, we say that they share a value $c$ ignoring multiplicity(IM) if $f^{-1}(c)$ is the same as $g^{-1}(c)$ as sets. We further say that $f$ and $g$ share a value $c$ counting multiplicity(CM) if the zeroes of $f(x)-c$ and $g(x)-c$ are the same with multiplicity.  The famous Nevanlinna theory \cite{Nev} establishes that any two meromorphic functions of finite order sharing five values IM must be the same. Moreover, if they share four values CM, then one must be a M\"obius transform of the other. The numbers four and five are the best possible for meromorphic functions. 

If one replaces meromorphic functions with $L$-functions, one can get much stronger results. In particular, it was shown by M. Ram Murty and V. Kumar Murty \cite{Rm} that if two $L$-functions in the Selberg class share a value CM, then they should be the same. Steuding \cite{Std2} further showed that two $L$-functions in the Selberg class sharing two values IM, with some additional conditions should be the same. In 2011, Bao Qin Li \cite{Bao2} proved the result of Steuding dropping the extra conditions. In a previous paper in 2010, Bao Qin Li \cite{Bao1} also showed that if $f$ is a meromorphic function  with finitely many poles and $L$ is an $L$-function from the extended Selberg class, such that they share one value CM and another value IM, then they should be the same.

In this paper, we establish all the above results in the more general setting of the Lindel\"of class, where we replace the functional equation and the Euler product in the Selberg class by a growth condition. This class has a rich algebraic structure and forms a differential graded ring. In particular, the Lindel\"of class is closed under derivatives, i.e., if $L$ is in the Lindel\"of class, then so is $L'$. We also show a different kind of uniqueness theorem which states that if a meromorphic function $f$ with finitely many poles and an $L$-function $L$ in the Lindel\"of class share a value CM and their derivatives share zeroes up to an error, then they must be the same. As a corollary, we get the same result for the Selberg class, which is a subset of the Lindel\"of class.

The article is organized as follows. In section \ref{class}, we introduce the class $\mathbb{M}_1$ of $L$-functions that we will be working with. In section \ref{nevanlinna}, we introduce notations from Nevanlinna theory. In section \ref{Main}, we state out main results and in sections \ref{preliminaries} and \ref{proofs}, we give the proofs.

\section{The class $\mathbb{M}_1$}\label{class}
The Selberg class \cite{Slb} $\mathbb{S}$ consists of functions $F(s)$ satisfying the following properties:\\\\
(1) Dirichlet series - It can be expressed as a Dirichlet series
\begin{equation*}
F(s) = \sum_{n=1}^{\infty} \frac{a_F(n)}{n^s},
\end{equation*}
which is absolutely convergent in the region $\Re(s)>1$.\\\\
(2) Analytic continuation - There exists a non-negative integer $k$, such that $(s-1)^k F(s)$ is an entire function of finite order.\\\\
(3) Functional equation - There exist non-negative real numbers $Q$ and $\alpha_i$, complex numbers $\beta_i$ with $\Re(\beta_i) \geq 0$, and $w\in\mathbb{C}$ satisfying $|w| =1$, such that
\begin{equation*}\label{fneq}
\Phi(s) := Q^s \prod_i \Gamma(\alpha_i s + \beta_i) F(s)
\end{equation*}
satisfies the functional equation
\begin{equation*}
\Phi(s) = w \bar{\Phi}(1-\bar{s}).
\end{equation*}
\\\\
(4) Euler product - There is an Euler product of the form
\begin{equation*}
F(s) = \prod_{p} F_p(s),
\end{equation*}
where
\begin{equation*}
\log F_p(s) = \sum_{k=1}^\infty \frac{b_{p^k}}{p^{ks}}
\end{equation*}
with $b_{p^k} = O(p^{k \theta})$ for a $\theta < 1/2$.\\\\
(5) Ramanujan Hypothesis - For any $\epsilon >0$, $|a_F(n)| = O_\epsilon(n^\epsilon)$.\\\\

The constants in the functional equation depend on $F$, and although the functional equation may not be unique, we have some invariants, such as the degree $d_F$ of $F$, defined by
\begin{equation*}
d_F = 2\sum_{i} \alpha_i.
\end{equation*}
The factor $Q$ in the functional equation gives rise to another invariant referred to as the conductor $q_F$, defined by
\begin{equation*}
q_F = (2\pi)^{d_F} Q^2 \prod_i {\alpha_i}^{2\alpha_i}.
\end{equation*}
These invariants play an important role in studying the growth of the $L$-function.

Note that the Selberg class is not closed under addition. In \cite{Km}, V. Kumar Murty defined a class of $L$-functions based on growth conditions. We start by defining two different growth parameters $\mu$ and $\mu*$.

\begin{definition}
{\bf The class $\mathbb{T}.$} We define the class $\mathbb{T}$ to be the set of functions $F(s)$ satisfying the following conditions:\\\\
(1) Dirichlet series - For $\sigma>1$, $F(s)$ is given by the absolutely convergent Dirichlet series
\begin{equation*}
F(s) = \sum_{n=1}^\infty \frac{a_F(n)}{n^s}.\\\\
\end{equation*}
(2) Analytic continuation - There exists a non-negative integer $k$, such that $(s-1)^k F(s)$ is an entire function of order $\leq 1$.\\\\
(3) Ramanujan Hypothesis - $|a_F(n)| = O_\epsilon(n^\epsilon)$ for any $\epsilon>0$.\\
\end{definition}

\begin{definition}  
For $F\in \mathbb{T}$, we define $\mu_F(\sigma)$ as
\begin{equation}\label{mu1}
\mu_F(\sigma) := \left \{ \begin{array}{ll}
                                      \inf \{ \lambda \geq 0 : |F(s)| \leq (|s| + 2)^\lambda, \text{ for all } s \text{ with } Re(s)=\sigma \} \\\\
                                      \infty, \text{ if the infimum does not exist.}
\end{array} \right.
\end{equation}
We also define:
\begin{equation}\label{mu2}
\mu_F^*(\sigma) := \left \{ \begin{array}{ll}
                                      \inf \{ \lambda \geq 0 : |F(\sigma + it)| \ll_{\sigma} (|t| + 2)^\lambda\}, \\\\
                                      \infty, \text{ if the infimum does not exist.}
\end{array} \right.
\end{equation}
with the implied constant depending on $F$ and $\sigma$. If $F\in\mathbb{T}$ has a pole of order $k$ at $s=1$, consider the function
\begin{equation*}
G(s) := \left( 1-\frac{2}{2^s} \right)^k F(s).
\end{equation*}
G(s) is an entire function and belongs to $\mathbb{T}$. We define
\begin{equation*}
\mu_F(\sigma) := \mu_G(\sigma).
\end{equation*}
\begin{equation*}
\mu_F^*(\sigma) := \mu_G^*(\sigma).
\end{equation*}
\end{definition}
Intuitively, $\mu_F^*$ does not depend on how $F$ behaves close to the real axis.\\

Since the Dirichlet series is absolutely convergent for $Re(s)>1$, we have $\mu_F^*(\sigma)=0$.\\

If $F\in\mathbb{S}$, by the functional equation\eqref{fneq}, using Stirling's formula, we have (see \cite[sec. 2.1]{Km})
\begin{equation}\label{deg}
\mu_F^*(\sigma) \leq \frac{1}{2} d_F (1-2\sigma) \text{ for } \sigma<0.
\end{equation}

Using the Phragm\'en-Lindel\"of theorem, we deduce that
\begin{equation*}
\mu_F^*(\sigma) \leq \frac{1}{2} d_F (1-\sigma) \text{ for } 0<\sigma<1.
\end{equation*}
The same results hold for $\mu_F$ upto a constant depending on $F$ by a similar argument.\\ 
It follows from the definition that $\mu_F^*(\sigma) \leq \mu_F(\sigma)$ for any $\sigma$.
\begin{definition} 
{\bf The class $\mathbb{M}$.} We now define the class $\mathbb{M}$ (\cite[sec.2.4]{Km}) to be the set of functions $F(s)$ satisfying the following conditions:\\\\
(1) Dirichlet series - For $\sigma>1$, $F(s)$ is given by the absolutely convergent Dirichlet series \begin{equation*}
\sum_{n=1}^\infty \frac{a_F(n)}{n^s}.\\\\
\end{equation*}
(2) Analytic continuation - There exists a non-negative integer $k$ such that $(s-1)^k F(s)$ is an entire function of order  $\leq 1$.\\\\
(3) Growth condition - The quantity $\frac{\mu_F(\sigma)}{(1-2\sigma)}$ is bounded for $\sigma<0$.\\\\
(4) Ramanujan Hypothesis - $|a_F(n)| = O_\epsilon(n^\epsilon)$ for any $\epsilon >0$.\\
\end{definition}

Define
\begin{equation*}
c_F := \limsup_{\sigma<0} \frac{2\mu_F({\sigma})}{1-2\sigma}, \hspace{3mm} c_F^* := \limsup_{\sigma<0} \frac{2\mu_F^*({\sigma})}{1-2\sigma}.
\end{equation*}
\vspace{1mm}

As we shall see, these will play the role of degree for elements of $\mathbb{M}$.

We further introduce a stronger growth condition on the functions in $\mathbb{M}$. The results in this paper will hold subject to these conditions.

\begin{definition}
{\bf The class $\mathbb{M}_1$}. A function $F\in\mathbb{M}$ is in $\mathbb{M}_1$ if it further satisfies the following. 

There exist $\epsilon, \delta > 0$ such that
    $$
    \limsup_{|s|\to\infty} \{ s : |\arg(s)|>\pi/2 + \delta, |F(s)| \gg_\sigma (|s| + 2)^{\mu_F^*(\sigma) - \epsilon}\} = \infty.
    $$
\end{definition}    

\section{Nevanlinna Theory}\label{nevanlinna}

Nevanlinna theory was introduced by R. Nevanlinna \cite{Nev} to study the value-distribution of meromorphic functions. We recall some basic definitions and facts commonly used.

Let $f$ be a meromorphic function and denote the number of poles of $f(s)$ in $|s|<r$ by $n(f,r)$ counting multiplicities and denote by $n(f,c,r)$ the number of $c$-values of $f$ in $|s|<r$, counting multiplicities. Indeed
\begin{equation*}
    n(f,c,r) = n\left(\frac{1}{f-c}, r\right).
\end{equation*}

The integrated counting function is defined as

\begin{equation}\label{ten}
    N(f,c,r) := \int_0^r \bigg( n(f,c,t) - n(f,c,0) \bigg) \frac{dt}{t} + n(f,c,0) \log r.
\end{equation}

and 

\begin{equation*}
    N(f,r) := \int_0^r \bigg(n(f,t) - n(f,0)\bigg) \frac{dt}{t} + n(f,0) \log r.
\end{equation*}

The proximity function is defined by

\begin{equation*}
    m(f,r) := \frac{1}{2\pi} \int_0^{2\pi} \log^+ |f(re^{i\theta})|d\theta,
\end{equation*}

and

\begin{equation*}
    m(f,c,r) := m\left(\frac{1}{f-c}, r\right),
\end{equation*}

where $\log^+ x = \max\{0,\log x\}$. The Nevanlinna characteristic function of $f$ is defined by

\begin{equation*}
    T(f,r) = N(f,r) + m(f,r).
\end{equation*}

We recall some basic properties of these functions.

\begin{enumerate}
    \item If $f$ and $g$ are meromorphic functions, then
    \begin{equation*}
    T(fg,r) \leq T(f,r) + T(g,r), \hspace{4mm} m(fg,r)\leq m(f,r) + m(g,r).
    \end{equation*}
    \begin{equation*}
    T(f+g,r) \leq T(f,r) + T(g,r) + O(1), \hspace{4mm} m(f+g,r)\leq m(f,r) + m(g,r) + O(1).
    \end{equation*}
    \vspace{2mm}
    
    \item The complex order of a meromorphic function is given by
    \begin{equation*}
        \rho(f) := \limsup_{r\to\infty} \frac{\log T(f,r)}{\log r}.
    \end{equation*}
    \vspace{2mm}
    
    \item If $\rho(f)$ is finite, then we have the logarithmic derivative lemma (see \cite{Nev}, p. 370),
    \begin{equation}\label{logl}
        m\left(\frac{f'}{f},r\right) = O(\log r).
    \end{equation}
\end{enumerate}
\vspace{2mm}

The main theorem of Nevanlinna states that $T(f,c,r):= N(f,c,r) + m(f,c,r)$ differs from the characteristic function by a bounded quantity.

\begin{theorem}\label{neva}
(First Fundamental Theorem) Let $f$ be a meromorphic function and let $c$ be any complex number. Then
\begin{equation*}
    T(f,c,r) = T(f,r) + O(1),
\end{equation*}
where the error term depends on $f$ and $c$ and is independent of $r$.
\end{theorem}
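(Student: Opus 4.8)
The plan is to deduce this from Jensen's formula. Recall that if $f$ is meromorphic on a neighbourhood of the closed disc $\{|s|\le r\}$ with $f(0)\ne 0,\infty$, and $a_\nu$ (resp.\ $b_\mu$) runs over the zeros (resp.\ poles) of $f$ in $|s|<r$ counted with multiplicity, then Jensen's formula asserts
\[
\frac{1}{2\pi}\int_0^{2\pi}\log|f(re^{i\theta})|\,d\theta \;=\; \log|f(0)| \;+\; \sum_{|a_\nu|<r}\log\frac{r}{|a_\nu|} \;-\; \sum_{|b_\mu|<r}\log\frac{r}{|b_\mu|}.
\]
First I would record the elementary computation (partial summation) showing that $\sum_{|b_\mu|<r}\log(r/|b_\mu|)=N(f,r)$ and, applied to $1/f$, that $\sum_{|a_\nu|<r}\log(r/|a_\nu|)=N(1/f,r)=N(f,0,r)$, still under the hypothesis $f(0)\ne 0,\infty$.

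Next, using the pointwise decomposition $\log x=\log^+ x-\log^+(1/x)$ valid for $x>0$, the left-hand integral splits as $m(f,r)-m(1/f,r)=m(f,r)-m(f,0,r)$. Substituting into Jensen's formula and rearranging yields
\[
m(f,r)+N(f,r)\;=\;m(f,0,r)+N(f,0,r)+\log|f(0)|,
\]
that is, $T(f,r)=T(f,0,r)+\log|f(0)|$, which is precisely the claimed identity in the case $c=0$.

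For arbitrary $c$, I would apply the case just proved to the meromorphic function $f-c$, obtaining $T(f-c,r)=T(f-c,0,r)+O(1)$. Since $f$ and $f-c$ have the same poles, $N(f-c,r)=N(f,r)$, while from $\log^+|x+y|\le\log^+|x|+\log^+|y|+\log 2$ we get $|m(f,r)-m(f-c,r)|\le\log^+|c|+\log 2$, so $T(f-c,r)=T(f,r)+O(1)$; on the other side, $T(f-c,0,r)=N(1/(f-c),r)+m(1/(f-c),r)=N(f,c,r)+m(f,c,r)=T(f,c,r)$ by the very definitions of $N(f,c,r)$ and $m(f,c,r)$. Combining the three relations gives $T(f,c,r)=T(f,r)+O(1)$ with error independent of $r$.

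The only real subtlety is the degenerate case in which $f$ has a zero or a pole at the origin (or $f(0)=c$ once we pass to $f-c$), where the stated form of Jensen's formula does not apply directly. There I would invoke the generalized Jensen formula: writing $f(s)=c_\lambda s^{\lambda}+\cdots$ near $s=0$ with $c_\lambda\ne 0$ and $\lambda\in\mathbb{Z}$, the term $\log|f(0)|$ is replaced by $\log|c_\lambda|$ together with a term $\lambda\log r$ that is exactly cancelled by the contributions $n(f,0)\log r$ and $n(f,0,r)\log r$ appearing in the definitions \eqref{ten} of the integrated counting functions, so the resulting discrepancy is again a constant in $r$. I expect this bookkeeping, rather than any conceptual point, to be the one place demanding care, since the theorem itself is the classical First Fundamental Theorem and the error term is bounded essentially by construction.
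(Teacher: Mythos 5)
Your proposal is correct, and it is the classical derivation of the First Fundamental Theorem: Jensen's formula, the splitting $\log x=\log^+x-\log^+(1/x)$ giving $m(f,r)-m(1/f,r)$ on the left, partial summation identifying the zero/pole sums with $N(1/f,r)$ and $N(f,r)$, the reduction from general $c$ to $c=0$ by passing to $f-c$ with the estimates $N(f-c,r)=N(f,r)$ and $|m(f,r)-m(f-c,r)|\le\log^+|c|+\log 2$, and the generalized Jensen formula when the origin is a zero, pole, or $c$-point. The paper does not prove Theorem \ref{neva} at all; it is quoted as a standard fact of Nevanlinna theory with a citation, so there is no internal argument to compare against---your proof is precisely the one in the standard references (Nevanlinna, Hayman). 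One small bookkeeping remark: in the degenerate case the compensating contributions coming from the definition \eqref{ten} are the terms $n(f,c,0)\log r$ (the multiplicity of the $c$-point at the origin) and $n(f,0)\log r$ (the pole order at the origin), not ``$n(f,0,r)\log r$'' as written; with $\lambda$ equal to the difference of these two multiplicities the $\lambda\log r$ term cancels exactly as you indicate, leaving the $r$-independent constant $\log|c_\lambda|$.
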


We also have a bound for the Nevanlinna characteristic in terms of the value distribution of three or more values. This is often called the Second Fundamental theorem of Nevanlinna theory.

\begin{theorem}\label{neva2}
(Second Fundamental Theorem) Let $f$ be a meromorphic function of finite order. Suppose $a_j$'s are distinct complex values (including $\infty$) and $q\geq 3$, we have
\begin{equation*}
    (q-2) T(f,r) \leq \sum_{j=1}^q \overline{N} \left(\frac{1}{f-a_j},r\right) + O(\log r),
\end{equation*}
where $\overline{N}$ is the integrated counting function defined similarly as $N$, but without counting multiplicity.
\end{theorem}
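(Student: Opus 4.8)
The plan is to run the classical Nevanlinna argument, which converts the logarithmic derivative estimate \eqref{logl} into an upper bound for $T(f,r)$ by way of an auxiliary rational combination of the translates $f-a_j$. It is convenient first to reduce to finite values: composing $f$ with a M\"obius transformation changes $T(f,r)$ and every integrated counting function only by $O(1)$ (by Theorem \ref{neva}), preserves finiteness of the order, and moves the value $\infty$ to a finite point, so it suffices to show that for any $n\ge 2$ distinct \emph{finite} complex numbers $b_1,\dots,b_n$,
\begin{equation*}
(n-1)\,T(f,r)\ \le\ \overline N(f,r)+\sum_{i=1}^{n}\overline N(f,b_i,r)+O(\log r).
\end{equation*}
Applying this with $\{b_i\}$ equal to the finite members of $\{a_j\}$ (so $n=q$ or $n=q-1$), interpreting $\overline N\!\left(1/(f-\infty),r\right)$ as $\overline N(f,r)$, and using $\overline N(f,r)\le T(f,r)$ when $\infty$ is not among the $a_j$, then yields the statement of the theorem.

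For the displayed inequality, fix $3\delta=\min_{i\ne j}|b_i-b_j|$ and set $g(s)=\sum_{i=1}^{n}\tfrac1{f(s)-b_i}$. The first step is the separation estimate $\sum_{i=1}^{n} m(f,b_i,r)\le m(g,r)+O(1)$: on $|s|=r$, whenever $|f(s)-b_k|<\delta$ for some (then necessarily unique) $k$, all remaining summands of $g$ are bounded by $1/(2\delta)$, so either $|f(s)-b_k|$ is bounded below by a constant or $|g(s)|\ge \tfrac12|f(s)-b_k|^{-1}$; in either case $\sum_i\log^+|f(s)-b_i|^{-1}\le\log^+|g(s)|+O(1)$ pointwise in $s$, and this bound is trivial where no $b_k$ lies within $\delta$ of $f(s)$. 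Integrating over the circle gives the estimate.

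Next I would write $g=\frac1{f'}\sum_{i=1}^{n}\frac{f'}{f-b_i}$. Each $f-b_i$ has the same finite order as $f$ and derivative $f'$, so \eqref{logl} gives $m\!\left(\frac{f'}{f-b_i},r\right)=O(\log r)$, and subadditivity of $m$ gives $m(g,r)\le m\!\left(\tfrac1{f'},r\right)+O(\log r)$. Applying Theorem \ref{neva} to $1/f'$, one gets $m\!\left(\tfrac1{f'},r\right)=T(f',r)-N\!\left(\tfrac1{f'},r\right)+O(1)$, while $T(f',r)\le T(f,r)+\overline N(f,r)+O(\log r)$ because $m(f',r)\le m(f,r)+m(f'/f,r)$ with \eqref{logl}, and because a pole of $f$ of order $m$ is a pole of $f'$ of order $m+1$, so that $N(f',r)=N(f,r)+\overline N(f,r)+O(\log r)$. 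Chaining these with the separation estimate and with $\sum_i m(f,b_i,r)=n\,T(f,r)-\sum_i N(f,b_i,r)+O(1)$ (Theorem \ref{neva} again) produces
\begin{equation*}
(n-1)\,T(f,r)\ \le\ \sum_{i=1}^{n}N(f,b_i,r)+\overline N(f,r)-N\!\left(\tfrac1{f'},r\right)+O(\log r).
\end{equation*}

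The last step, which I expect to be the main obstacle, is the multiplicity bookkeeping that absorbs $N(1/f',r)$: at a $b_i$-point of $f$ of multiplicity $\mu$ the derivative $f'$ vanishes to order $\mu-1$, and since the $b_i$ are distinct these zero sets are mutually disjoint and disjoint from the poles of $f$, whence $N\!\left(\tfrac1{f'},r\right)\ge \sum_{i=1}^{n}\big(N(f,b_i,r)-\overline N(f,b_i,r)\big)+O(\log r)$. Substituting this lower bound cancels the $N(f,b_i,r)$ terms and leaves precisely $(n-1)T(f,r)\le \overline N(f,r)+\sum_i\overline N(f,b_i,r)+O(\log r)$; with more care one can even retain a nonnegative ramification term on the right. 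The delicate point is to make sure every $b_i$-point, every pole, and every additional zero of $f'$ is counted with its correct weight and nothing is double-counted. Once the logarithmic derivative lemma \eqref{logl} is granted, as it is here, the rest is exactly this accounting together with the First Fundamental Theorem.
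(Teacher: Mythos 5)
The paper does not actually prove this statement: Theorem \ref{neva2} is quoted as the classical Second Fundamental Theorem of Nevanlinna theory (as in \cite{Nev} or Hayman \cite{Hay}), so there is no internal proof to compare against. Your sketch is essentially the standard textbook proof of that classical result, and it is correct: the separation estimate $\sum_i m(f,b_i,r)\le m(g,r)+O(1)$ for $g=\sum_i 1/(f-b_i)$, the factorization $g=\frac{1}{f'}\sum_i\frac{f'}{f-b_i}$ combined with the logarithmic derivative lemma \eqref{logl} (which for finite order gives an unconditional $O(\log r)$ error, matching the statement), the identities $m(1/f',r)=T(f',r)-N(1/f',r)+O(1)$ and $T(f',r)\le T(f,r)+\overline N(f,r)+O(\log r)$, and the ramification bookkeeping $N(1/f',r)\ge\sum_i\bigl(N(f,b_i,r)-\overline N(f,b_i,r)\bigr)$ all chain together exactly as you say. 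Two minor remarks: the opening M\"obius reduction is superfluous, since your displayed inequality already accommodates $\infty$ through the $\overline N(f,r)$ term and you in fact never use the reduction afterwards; and the argument tacitly assumes $f$ is nonconstant (so that $f'\not\equiv 0$), which is harmless because the statement is trivial for constant $f$ but should be said when dividing by $f'$.
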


\section{The main results}\label{Main}

Let $\mathbb{M}_1$ denote the class of $L$-functions as defined in section 2 and $T(r,f)$ denote the Nevanlinna characteristic of a meromorphic function $f$(refer section 3). We establish the following results.
\begin{theorem}\label{thm1}
If $L \in \mathbb{M}_1$, then
\begin{equation*}
    T(L,r) = \Omega(r\log r).
\end{equation*}

\end{theorem}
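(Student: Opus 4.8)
The plan is to extract a lower bound on $T(L,r)$ from the growth condition defining $\mathbb{M}_1$, combined with the First Fundamental Theorem. The key point is that $T(L,r) \geq m(L,r)$, and $m(L,r)$ detects how large $|L(s)|$ can be on circles $|s|=r$; the $\mathbb{M}_1$ condition guarantees that $|L(s)|$ is genuinely large — of order $(|s|+2)^{\mu_L^*(\sigma)-\epsilon}$ — on an unbounded set of points lying in the left half-plane sector $|\arg(s)| > \pi/2 + \delta$. So first I would fix $\epsilon,\delta>0$ as in the definition of $\mathbb{M}_1$, and observe that $\mu_L^*(\sigma)$ itself grows linearly in $|\sigma|$ as $\sigma \to -\infty$: since $L \in \mathbb{M}$, the quantity $\mu_L^*(\sigma)/(1-2\sigma)$ has positive $\limsup$ (one should check this is strictly positive for a genuine $L$-function — for instance using that $L$ is not identically constant together with the Phragm\'en–Lindel\"of interpolation; this is the one place where I would want to be careful). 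Granting $c_L^* > 0$, on a sequence of points $s_n$ with $|s_n|\to\infty$ and $\Re(s_n) = \sigma_n \to -\infty$ proportionally to $|s_n|$ (forced by the sector condition $|\arg s_n| > \pi/2+\delta$), we get $\log|L(s_n)| \gg (\mu_L^*(\sigma_n) - \epsilon)\log|s_n| \gg |\sigma_n| \log|s_n| \gg |s_n| \log |s_n|$.

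Next I would convert this pointwise lower bound into a lower bound on the proximity function $m(L,r)$, or more robustly on $T(L,r)$ directly. The cleanest route is: pointwise largeness of $|L|$ at $s_n$ on the circle $r_n := |s_n|$ does not immediately bound $m(L,r_n)$ from below, since $m$ is an average. Instead I would invoke a Borel–Carath\'eodory type argument, or better, use the standard fact that for a function of finite order the maximum modulus $\log M(L,r) := \max_{|s|=r}\log|L(s)|$ and the characteristic are comparable in the sense $\log M(L,r) \leq C\, T(L, 2r)$ (a consequence of the Poisson–Jensen formula; see Nevanlinna). Applying this with the lower bound $\log M(L, r_n) \gg r_n \log r_n$ gives $T(L, 2r_n) \gg r_n \log r_n \gg r_n \log(2r_n)$ along the sequence $r_n \to \infty$, which is exactly the assertion $T(L,r) = \Omega(r\log r)$.

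The main obstacle is the first step: verifying that $c_L^*$ (equivalently $\mu_L^*(\sigma)$ for $\sigma$ very negative) is bounded below by a positive multiple of $|\sigma|$, rather than merely bounded above by one. For classical $L$-functions this follows from the functional equation and Stirling, but in the class $\mathbb{M}_1$ we have replaced the functional equation by the growth hypothesis; the role of the $\mathbb{M}_1$ condition is precisely to supply the missing lower bound, via the points $s_n$ where $|L(s_n)| \gg (|s_n|+2)^{\mu_L^*(\sigma_n)-\epsilon}$. I would need to argue that $\mu_L^*(\sigma_n)$ cannot itself go to zero along this sequence — which would follow if $\mu_L^*$ is, say, convex and eventually positive, or simply by noting that if $\mu_L^*(\sigma) = o(|\sigma|)$ then the displayed lower bound in the $\mathbb{M}_1$ definition would be incompatible with $L$ having finite order together with the $\Omega$-free behaviour on vertical lines. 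Once that linear lower bound on $\mu_L^*$ is in hand, the rest is the routine comparison of maximum modulus with the Nevanlinna characteristic.
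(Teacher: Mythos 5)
Your route is genuinely different from the paper's. The paper never extracts the lower bound from the proximity function or the maximum modulus; it first proves (Proposition \ref{prop5.1}) that the zero-counting function satisfies $N(L,0,r)=\Omega(r\log r)$, by contradiction through the Hadamard factorization: if $N(L,0,r)=o(r\log r)$, the canonical-product estimate quoted from Goldberg--Ostrovskii forces $\log|L(s)|=o(|s|\log|s|)$ throughout the plane, contradicting the largeness of $|L|$ at the points supplied by the $\mathbb{M}_1$ condition; Theorem \ref{thm1} then follows from the First Fundamental Theorem via $T(L,r)=T(L,0,r)+O(1)\ge N(L,0,r)+O(1)$. You instead pass directly from the pointwise largeness $\log|L(s_n)|\gg |s_n|\log|s_n|$ to the characteristic via $\log M(L,r)\le \frac{R+r}{R-r}\,T(L,R)$ with $R=2r$; that step is sound, provided you first clear the pole at $s=1$ (multiply by $(s-1)^k$, which changes $T$ and $\log M$ only by $O(\log r)$), since that inequality is for functions holomorphic in the disc. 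Granting the pointwise largeness, your argument is correct and shorter than the paper's, at the price of not yielding the count $N(L,0,r)=\Omega(r\log r)$, which the paper proves as a result of independent interest.

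The genuine gap is exactly where you suspected it: $c_L^*>0$ is not a consequence of membership in $\mathbb{M}_1$, and neither of your suggested derivations can work. Phragm\'en--Lindel\"of gives only upper bounds for $\mu_L^*$, and non-constancy does not force $\mu_L^*(\sigma)\gg|\sigma|$: for instance $F(s)=1+2^{-s}$ satisfies all the axioms of $\mathbb{M}$, has $\mu_F^*\equiv 0$ (so the strong growth condition of $\mathbb{M}_1$ is satisfied trivially), yet $c_F^*=0$ and $T(F,r)\asymp r$. So the linear lower bound on $\mu_L^*$ cannot be proved from the definitions; it must be hypothesised. This is in fact what the paper does: Proposition \ref{prop5.1} carries the hypothesis $c_F^*>0$, and the proof of Theorem \ref{thm1} rests on it, so your proof is on the same footing once you add $c_L^*>0$ as an explicit assumption rather than trying to derive it. One further point, common to your argument and the paper's, deserves to be made explicit: $c_L^*$ is only a limsup over $\sigma$, so one must also ensure that the abscissas $\sigma_n=\Re(s_n)$ of the points furnished by the $\mathbb{M}_1$ condition are ones where $\mu_L^*(\sigma_n)\gg|\sigma_n|$; without aligning the two sequences, the exponent $\mu_L^*(\sigma_n)-\epsilon$ in the largeness hypothesis could be $o(|s_n|)$ and the bound $\log|L(s_n)|\gg|s_n|\log|s_n|$ would not follow.
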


\begin{theorem}\label{thm2}
If two $L$-functions $L_1, L_2\in\mathbb{M}_1$ share a complex value $c$ counting multiplicity, then $L_1 = aL_2 -ac +c$ for some $a\in\mathbb{C}$.
\end{theorem}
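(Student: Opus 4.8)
The plan is to follow the classical Ram Murty--Murty / Steuding template, with the Euler product replaced by the growth hypothesis defining $\mathbb{M}_1$. First I would discard the trivial cases $L_1\equiv c$ or $L_2\equiv c$ (where the conclusion holds with $a=0$) and set $\psi(s):=\dfrac{L_1(s)-c}{L_2(s)-c}$. Since $L_1,L_2$ share $c$ counting multiplicity, every zero of $L_1-c$ is cancelled by a zero of $L_2-c$ of the same order and conversely, so the only possible zero or pole of $\psi$ is at the common pole $s=1$ of $L_1,L_2$; hence $\psi(s)=(s-1)^{m}e^{g(s)}$ for some $m\in\mathbb{Z}$ and entire $g$. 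Because $(s-1)^{k_i}L_i$ is entire of order $\le 1$, the functions $L_i-c$ and $(L_2-c)^{-1}$ have Nevanlinna order $\le 1$ (the latter via Theorem~\ref{neva}), so $\rho(\psi)\le 1$ and $g$ is a polynomial of degree $\le 1$; write $\psi(s)=(s-1)^{m}e^{\alpha s+\beta}$.

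Next I would pin down $m$ and $\alpha$ from the Dirichlet series on the right. For $\Re s\to+\infty$ one has $L_i(s)-c\sim b_i\,n_i^{-s}$, where $n_i\ge 1$ is the least index with a nonzero coefficient in $L_i-c$ and $b_i\ne 0$; hence $\psi(\sigma)\sim(b_1/b_2)(n_2/n_1)^{\sigma}$ as $\sigma\to+\infty$ along the real axis. Comparing moduli and arguments with $(\sigma-1)^m e^{\alpha\sigma+\beta}$ forces $m=0$, $\alpha=\log(n_2/n_1)\in\mathbb{R}$ and $e^{\beta}=b_1/b_2$. Thus
\[
L_1(s)-c=e^{\alpha s+\beta}\bigl(L_2(s)-c\bigr),\qquad \alpha\in\mathbb{R},
\]
and the theorem reduces to showing $\alpha=0$ (which then gives $a=e^{\beta}$).

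The heart of the proof is to rule out $\alpha\neq 0$. Suppose $\alpha\ne 0$; interchanging $L_1$ and $L_2$ if necessary (this replaces $\psi$ by $\psi^{-1}$, $\alpha$ by $-\alpha$, and leaves the hypotheses symmetric) we may assume $\alpha>0$. Since $\alpha$ is real, $|e^{\alpha s+\beta}|$ depends only on $\Re s$, so the displayed identity gives $\mu^*_{L_1}(\sigma)=\mu^*_{L_2}(\sigma)$ for all $\sigma$; combining this with the comparison $\mu_F(\sigma)=\mu^*_F(\sigma)+O(1)$ for $F\in\mathbb{M}$ (a Phragmén--Lindelöf estimate across horizontal strips, which I would establish in Section~\ref{preliminaries}) we get $\mu_{L_2}(\sigma)=\mu^*_{L_1}(\sigma)+O(1)$. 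Now invoke the defining property of $\mathbb{M}_1$ for $L_1$: there are $\epsilon,\delta>0$ and $s_n$ with $|s_n|\to\infty$, $|\arg s_n|>\pi/2+\delta$ (so $\sigma_n:=\Re s_n\to-\infty$ with $|\sigma_n|\asymp|s_n|$), and $|L_1(s_n)|\gg(|s_n|+2)^{\mu^*_{L_1}(\sigma_n)-\epsilon}$. Using $L_1(s_n)-c=e^{\alpha\sigma_n+\Re\beta}(L_2(s_n)-c)$ together with the $\mathbb{M}$-bound $|L_2(s_n)|\le(|s_n|+2)^{\mu_{L_2}(\sigma_n)+1}$, one obtains
\[
(|s_n|+2)^{\mu^*_{L_1}(\sigma_n)-\epsilon}\ \ll\ |L_1(s_n)-c|\ \ll\ e^{\alpha\sigma_n}(|s_n|+2)^{\mu^*_{L_1}(\sigma_n)+O(1)} .
\]
Taking logarithms, cancelling the common term $\mu^*_{L_1}(\sigma_n)\log(|s_n|+2)$, and using $\log(|s_n|+2)=\log|\sigma_n|+O(1)$ yields $\alpha|\sigma_n|\le O(\log|\sigma_n|)$, which is absurd for large $n$ since $\alpha>0$. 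Hence $\alpha=0$, so $L_1-c=e^{\beta}(L_2-c)$, i.e. $L_1=aL_2-ac+c$ with $a=e^{\beta}$.

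Two points need attention. I have tacitly used $\mu^*_{L_1}(\sigma_n)\to\infty$, so that subtracting $c$ does not destroy the lower bound for $|L_1(s_n)-c|$ and the $\mathbb{M}$-upper bound for $|L_2(s_n)|$ dominates; when the ``degree'' $c^*_{L_1}$ vanishes, $L_1$ (and hence $L_2$) is of very restricted type and the relation $L_1-c=e^{\alpha s+\beta}(L_2-c)$ with $\alpha\neq 0$ can be excluded directly from absolute convergence of the Dirichlet series in $\Re s>1$ together with condition~(3) of $\mathbb{M}$. The genuinely delicate input is the comparison $\mu_{L_2}(\sigma)=\mu^*_{L_2}(\sigma)+O(1)$: it is what places the $\mathbb{M}$-bound for $L_2$ on the same $\mu^*$-scale as the $\mathbb{M}_1$-lower bound for $L_1$, so that the exponential factor $e^{\alpha\sigma_n}$ cannot be absorbed into the polynomial error. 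I expect this comparison, and the verification that it interacts correctly with the precise form of the $\mathbb{M}_1$ growth hypothesis, to be the main obstacle; Theorem~\ref{thm1} is not needed for this argument but gives an alternative handle on the degenerate case.
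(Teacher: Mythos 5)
Your first half runs parallel to the paper's own proof: both form the quotient of $L_1-c$ by $L_2-c$, clear the only possible zero/pole at $s=1$ by a power of $(s-1)$, use order $\le 1$ to write the quotient as $(s-1)^m e^{\alpha s+\beta}$, and then look at $\Re s\to+\infty$. The paper stops there, asserting that since $L_1,L_2$ tend to their leading coefficients the exponential must be constant; that conclusion is valid only when $a_{L_1}(1)\neq c$ and $a_{L_2}(1)\neq c$, so that the quotient has a finite nonzero limit. You correctly notice that when a leading coefficient equals $c$ the limit argument only gives $m=0$ and $\alpha=\log(n_2/n_1)$, possibly a nonzero real number, and you spend the second half of your argument trying to exclude $\alpha\neq 0$ via the $\mathbb{M}_1$ growth condition. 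This residual case is exactly where you depart from the paper, and it is where your proof has a genuine gap.

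The exclusion of $\alpha\neq 0$ rests on the deferred comparison $\mu_F(\sigma)=\mu_F^*(\sigma)+O(1)$, uniformly in $\sigma<0$, for $F\in\mathbb{M}$, and this comparison is false in that generality. For $F(s)=1+2^{-s}$ one has $\mu_F^*(\sigma)=0$ for every $\sigma$ (on each vertical line $F$ is bounded by a $\sigma$-dependent constant), while the constraint at $t=0$ forces $\mu_F(\sigma)\gg |\sigma|/\log|\sigma|\to\infty$; nevertheless $\mu_F(\sigma)/(1-2\sigma)\to 0$, so $F$ satisfies all the conditions of $\mathbb{M}$, and the lower-bound condition of $\mathbb{M}_1$ as well. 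Consequently the slack $\mu_{L_2}(\sigma_n)-\mu^*_{L_1}(\sigma_n)$ in your chain of inequalities can be as large as $\asymp|\sigma_n|/\log|\sigma_n|$, and after multiplication by $\log(|s_n|+2)\asymp\log|\sigma_n|$ it contributes a term of size $\asymp|\sigma_n|$, exactly enough to absorb $e^{\alpha\sigma_n}$: no contradiction follows. Your fallback for the degenerate case $c^*_{L_1}=0$ also fails: $L_1=1+4^{-s}$, $L_2=1+2^{-s}$, $c=1$ satisfy $L_1-c=2^{-s}(L_2-c)$, i.e.\ $\alpha\neq 0$, both functions lie in $\mathbb{M}_1$, and they share the value $1$ CM (both preimage sets are empty), so absolute convergence together with condition (3) of $\mathbb{M}$ cannot rule this out --- indeed nothing can, without an extra hypothesis such as $a_{L_i}(1)\neq c$, which is precisely what the paper's own limit argument silently uses. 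So the portion of your proposal that goes beyond the paper is aimed at a real issue, but the two lemmas you propose to close it (the $\mu$ versus $\mu^*$ comparison, and the direct exclusion in the degenerate case) are both untrue as stated.
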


Denote $n(f,r)$ as the number of poles of $f$ counting multiplicity in $|s|<r$. We say that $f$ and $g$ share a complex value $c$ up to an error term $E(r)$, if 
\begin{equation*}
n\left(\frac{1}{(f-c)} - \frac{1}{(g-c)},r\right) \leq E(r).
\end{equation*}

Similarly, denote $\overline{n}(f,r)$ as the number of poles of $f$ ignoring multiplicity in $|s|<r$. We say that $f$ and $g$ share a complex value $c$ ignoring multiplicity, up to an error term $E(r)$, if 
\begin{equation*}
    \overline{n}\left(\frac{1}{(f-c)} - \frac{1}{(g-c)},r\right) \leq E(r).
\end{equation*}

\begin{theorem}\label{thm3}
Let $f$ be any meromorphic function on $\mathbb{C}$ with finitely many poles and $L\in\mathbb{M}_1$ be such that they share one complex value counting multiplicity and another complex value ignoring multiplicity, up to an error term $o(r\log r)$, then $f\equiv L$.
\end{theorem}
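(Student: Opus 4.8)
The plan is to adapt Bao Qin Li's argument to the class $\mathbb{M}_1$, working throughout with the single auxiliary function $F:=(f-a)/(L-a)$, where $a$ denotes the value shared counting multiplicity and $b$ the value shared ignoring multiplicity (both finite, $a\neq b$), and $E(r)=o(r\log r)$ is the error term; the goal is to force $F\equiv 1$, which is precisely $f\equiv L$.

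First I would record that $\rho(f)\le 1$. Apply the Second Fundamental Theorem (Theorem~\ref{neva2}) to $f$ at the three values $a,b,\infty$: since $f$ has finitely many poles, $\overline{N}(f,r)=O(\log r)$, and the sharing hypotheses give $\overline{N}(1/(f-a),r)\le\overline{N}(1/(L-a),r)+o(r\log r)$ and $\overline{N}(1/(f-b),r)\le\overline{N}(1/(L-b),r)+o(r\log r)$, each of which is at most $T(L,r)+o(r\log r)$ by Theorem~\ref{neva}. Hence $T(f,r)\le 2T(L,r)+o(r\log r)+O(\log r)$, and since $(s-1)^kL$ is entire of order $\le 1$ we have $T(L,r)=O(r^{1+\epsilon})$; therefore $\rho(f)\le 1$. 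Now, because $f$ and $L$ share $a$ CM, $F$ has no zeros or poles outside the finitely many poles of $f$ and of $L$ (if the CM-sharing is itself only up to the error term, $F$ additionally carries a ``defect factor'' supported on $o(r\log r)$ points); being meromorphic of order $\le 1$, Hadamard's factorization gives $F(s)=R(s)e^{\alpha s+\beta}$ with $R$ rational and $\alpha,\beta\in\mathbb{C}$, so that $T(F,r)\le T(R,r)+T(e^{\alpha s},r)+O(1)=O(r)=o(r\log r)$.

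Next, suppose $F\not\equiv 1$. If $s_0$ is a common $b$-point of $f$ and $L$, then evaluating $f-a=(L-a)F$ at $s_0$ (where $f(s_0)=L(s_0)=b\neq a$) gives $F(s_0)=1$; thus every common $b$-point is a zero of $F-1$. Writing $\overline{N}_{\mathrm{cb}}(r)$ for the integrated counting function of common $b$-points, this forces $\overline{N}_{\mathrm{cb}}(r)\le\overline{N}(1/(F-1),r)\le T(F-1,r)+O(1)\le T(F,r)+O(1)=o(r\log r)$. On the other hand, a $b$-point of $L$ that is not also a $b$-point of $f$ is a pole of $1/(f-b)-1/(L-b)$ and hence is counted by the error, so $\overline{N}(1/(L-b),r)-\overline{N}_{\mathrm{cb}}(r)\le\int_0^r \frac{E(t)}{t}\,dt=o(r\log r)$ (using $E(t)=o(t\log t)$). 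Combining, $\overline{N}(1/(L-b),r)=o(r\log r)$.

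The final step is to contradict this using the structure of $\mathbb{M}_1$. The essential input is the $\mathbb{M}_1$-analogue of the classical fact (true in the Selberg class) that an $L$-function takes every complex value abundantly, namely $\overline{N}(1/(L-c),r)\gg r\log r$ for every $c\in\mathbb{C}$; this is exactly where the defining growth condition of $\mathbb{M}_1$ must enter, via a Jensen / argument-principle count of $c$-values, in the spirit of Theorem~\ref{thm1} and the preliminary estimates of Section~\ref{preliminaries}. Taking $c=b$ contradicts $\overline{N}(1/(L-b),r)=o(r\log r)$, forcing $F\equiv 1$ and hence $f\equiv L$. I expect this abundance statement to be the real obstacle: it is the only ingredient requiring analysis specific to $\mathbb{M}_1$ rather than soft Nevanlinna theory, and without it a Picard-type exceptional value $b$ would make the hypothesis on $b$ essentially vacuous; the remaining delicate point is to keep the ``defect factor'' in $F$ of characteristic $o(r\log r)$ should the CM-sharing itself carry an error.
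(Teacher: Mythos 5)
Your reduction via the single quotient $F=(f-a)/(L-a)$ is clean, but it hinges on an input that is strictly stronger than anything in the paper (or in your sketch), and this is a genuine gap rather than a routine missing lemma. At a common $b$-point where $L-b$ vanishes to order $m$ and $f-b$ to order $n$, the function $F-1=(f-L)/(L-a)$ vanishes only to order $\geq\min(m,n)$, so your comparison with $\overline{N}\left(\frac{1}{F-1},r\right)$ can only ever control the \emph{distinct} $b$-points of $L$. Hence the contradiction you need is $\overline{N}\left(\frac{1}{L-b},r\right)=\Omega(r\log r)$, an abundance statement \emph{ignoring multiplicity}, for an arbitrary value $b$. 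Proposition \ref{prop5.1} and Theorem \ref{thm1} give only the with-multiplicity count $N$ (they come from Jensen/Hadamard-type arguments, which inherently count multiplicity), and the excess $N-\overline{N}$ at the value $b$ is controlled only by zeros of $L'$, i.e.\ by $T(L',r)$, which is itself of order $r\log r$; so there is no soft Nevanlinna argument deducing the $\overline{N}$-version from Theorem \ref{thm1}, and proving it in the generality of $\mathbb{M}_1$ (no functional equation, no Euler product) would need genuinely new control on multiplicities of $c$-points. You correctly flag this as ``the real obstacle,'' but flagging it does not fill it: as written the proof is incomplete exactly at its critical step.

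The paper's argument is built precisely to avoid this issue. It uses Li's auxiliary function
$G=\left(\frac{L'}{(L-a)(L-b)}-\frac{f'}{(f-a)(f-b)}\right)(f-L)$: at a common $b$-point each fraction has at most a simple pole, so their difference has at most a simple pole, which is cancelled by the zero of $f-L$ there \emph{regardless of the multiplicities} on the two sides, while non-shared $b$-points contribute at most the error $o(r\log r)$ to $N(G,r)$. Combining with $m(G,r)=O(r)$ (logarithmic-derivative lemma plus $T(e^{g}Q^{\pm1},r)=O(r)$) gives $T(G,r)=o(r\log r)$, and since every CM-shared $a$-point of $L$ is a zero of $G$, the only abundance needed is $N(L-a,0,r)=\Omega(r\log r)$ --- the with-multiplicity count at the CM value, which is exactly what Proposition \ref{prop5.1}/Theorem \ref{thm1} supply. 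To salvage your route you would have to either prove the multiplicity-free abundance of $b$-points in $\mathbb{M}_1$ (a substantially harder statement) or reintroduce an auxiliary function of Li's type, which is the paper's path; your order estimate $\rho(f)\le 1$ and the factorization $F=Re^{\alpha s+\beta}$ are fine and coincide with the corresponding steps in the paper.
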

\begin{theorem}\label{thm4}
Let $f$ be any meromorphic function on $\mathbb{C}$  of order $\leq 1$, with finitely many poles and $L\in\mathbb{M}_1$ be such that they share one complex value counting multiplicity and their derivatives, $f'$ and $L'$ share zeroes up to an error term $o(r\log r)$,  then $f= \mu L$, where $\mu$ is a root of unity.
\end{theorem}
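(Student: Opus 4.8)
The plan is to imitate the earlier uniqueness results: first convert the ``counting multiplicity'' hypothesis into an explicit multiplicative relation, then use the condition on the derivatives together with the growth theory of $\mathbb{M}_1$ to remove an exponential factor and to identify the surviving constant. Write $c$ for the shared value. Since $f$ and $L$ share $c$ counting multiplicity, no pole of $f$ is a $c$-point of $L$, so the quotient $\frac{f-c}{L-c}$ has only finitely many zeros and poles; as $f$ and $L$ have order $\leq 1$, this quotient is of the form $R(s)e^{\alpha s+\beta}$ with $R$ rational and $\alpha,\beta\in\mathbb{C}$, i.e.
\begin{equation*}
f - c = R(s)\,e^{\alpha s + \beta}\,(L - c).
\end{equation*}
Logarithmic differentiation gives $\frac{f'}{f-c}-\frac{L'}{L-c}=\frac{R'}{R}+\alpha$, equivalently $f'=Re^{\alpha s+\beta}\bigl((\tfrac{R'}{R}+\alpha)(L-c)+L'\bigr)$.

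Next I would use the hypothesis on $f'$ and $L'$. A $c$-point of $L$ of multiplicity $m$ is, by the CM hypothesis, a $c$-point of $f$ of multiplicity $m$ and hence a zero of multiplicity $m-1$ of both $f'$ and $L'$, so it does not enter $n\!\left(\tfrac1{f'}-\tfrac1{L'},r\right)$; and at any common zero $s_0$ of $f'$ and $L'$ with $L(s_0)\neq c$, the displayed formula for $f'$ forces $(R'+\alpha R)(s_0)=0$. Consequently, if $R'+\alpha R\not\equiv 0$, every zero of $L'$ other than a $c$-point of $L$ (or one of the finitely many zeros of $R'+\alpha R$) contributes its full multiplicity to the $o(r\log r)$ discrepancy, so that
\begin{equation*}
N(L',0,r) = N(L,c,r) - \overline N(L,c,r) + o(r\log r);
\end{equation*}
in words, almost every zero of $L'$ would have to sit at a multiple $c$-point of $L$.

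The heart of the proof is to rule this out. Since $\mathbb{M}_1$ is closed under differentiation, $L'\in\mathbb{M}_1$, so Theorem~\ref{thm1} gives $T(L',r)=\Omega(r\log r)$; together with $m(1/L',r)=o(r\log r)$ (which follows from $L'(s)\to 0$ as $\Re(s)\to+\infty$) this yields $N(L',0,r)=\Omega(r\log r)$, and hence, by the previous display, $N(L,c,r)-\overline N(L,c,r)=\Omega(r\log r)$: a positive proportion of the $c$-values of $L$ would be multiple. Meanwhile the behaviour of $L$ in $\Re(s)>1$ gives $m(L,c,r)=o(r\log r)$, so $N(L,c,r)=T(L,r)+o(r\log r)=\Theta(r\log r)$ by Theorem~\ref{thm1} (the case where $c$ equals the leading Dirichlet coefficient of $L$ being handled directly from the Dirichlet expansion). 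To close the loop I would compare this concentration of the zeros of $L-c$ (via its Hadamard factorisation) with the lower bound $|L(s)|\gg(|s|+2)^{\mu^*_L(\sigma)-\epsilon}$ that the defining condition of $\mathbb{M}_1$ forces along an unbounded set in $|\arg s|>\pi/2+\delta$; this should be incompatible, and the resulting contradiction gives $R'+\alpha R\equiv 0$, whence (as $R$ is rational) $\alpha=0$ and $R\equiv\mu_0$ is a nonzero constant, so $f-c=\mu_0(L-c)$.

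It then remains to see that $\mu_0$ is a root of unity. With $f=\mu_0 L+(1-\mu_0)c$ and $f'=\mu_0 L'$, I would compare the expansions of $f$ and $L$ as $\Re(s)\to+\infty$ using the Dirichlet series of $L$ and exploit the remaining rigidity of $L$-functions in the class to conclude $\mu_0^{N}=1$, so that $f=\mu L$ with $\mu$ a root of unity. I expect the main obstacle to be the growth argument in the third paragraph: making precise, and uniform enough to absorb the $o(r\log r)$ error, the incompatibility between the $\mathbb{M}_1$ extremal-growth condition and a function $L-c$ whose zeros are few but highly multiple. The remaining steps are comparatively routine adaptations of the First and Second Fundamental Theorems and of the arguments in the Selberg-class literature.
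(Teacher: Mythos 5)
Your opening reduction is fine and runs parallel to the paper: writing $f-c=R(s)e^{\alpha s+\beta}(L-c)$ is the same normalization the paper makes via $F=\frac{L-a}{f-a}Q=e^{g}$ with $g$ linear, and your observation that a shared zero of $f'$ and $L'$ off the $c$-points forces $(R'+\alpha R)(s_0)=0$ is correct. The gap is exactly where you say you expect it: the third paragraph. Two things break. First, you invoke Theorem \ref{thm1} for $L'$, which needs $L'\in\mathbb{M}_1$ (and, through Proposition \ref{prop5.1}, $c_{L'}^{*}>0$); the paper asserts closure under differentiation only for the Lindel\"of class $\mathbb{M}$, and the extremal lower-bound condition defining $\mathbb{M}_1$ is not known to pass to $L'$, so this ingredient is unproved. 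Second, and more seriously, the contradiction you aim for does not exist in the form you state it: a positive proportion of multiple $c$-points is \emph{not} incompatible with the $\mathbb{M}_1$ growth condition. The strong growth condition, via Jensen/Hadamard as in Proposition \ref{prop5.1}, bounds from below the total count $N(L,c,r)$; it says nothing against those zeros being highly multiple. Concretely, $\zeta(s)^2$ lies in the Selberg class, hence in $\mathbb{M}_1$ and satisfies the strong growth condition, yet every one of its zeros is multiple, so for $c=0$ one has $N(L,c,r)-\overline{N}(L,c,r)\gg r\log r$ with no conflict whatsoever. So the sentence ``this should be incompatible'' is precisely the missing proof: if a contradiction is available along your lines it must come from the identity $N(L',0,r)=N(L,c,r)-\overline{N}(L,c,r)+o(r\log r)$ itself (i.e.\ from producing many zeros of $L'$ away from the $c$-points of $L$), which is a genuinely hard statement in this generality, not from the Hadamard growth of $L-c$. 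The final identification of the constant as a root of unity is also only sketched.

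For comparison, the paper avoids both difficulties. It never differentiates the class: it forms the auxiliary function \eqref{aux2}, shows $N(G,r)=o(r\log r)$ because the CM hypothesis cancels principal parts at the $a$-points and the derivative-sharing hypothesis cancels all but $o(r\log r)$ of the poles coming from zeros of $f'$ and $L'$, and shows $m(G,r)=O(r)$ using the relation $L-a=(f-a)e^{g}/Q$ together with the logarithmic derivative lemma \eqref{logl}. Thus $T(G,r)=o(r\log r)$, while every zero of $L-a$ is a zero of $G$ and $N(L-a,0,r)=\Omega(r\log r)$ by Theorem \ref{thm1} applied to $L$ itself; hence $G\equiv 0$, from which the multiplicative relation between $f-a$ and $L-a$ follows. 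If you want to salvage your route you would have to (i) prove that $\mathbb{M}_1$ is stable under differentiation with $c_{L'}^{*}>0$, and (ii) rule out that almost all zeros of $L'$ sit at multiple $c$-points of $L$ — both substantial new results; the auxiliary-function device is precisely how the paper sidesteps them.
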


Since $\mathbb{M}_1$ contains the Selberg class, we have, in particular, the following corollary.

\begin{corollary}
Let $f$ be a meromorphic function on $\mathbb{C}$ of order $\leq 1$ with finitely many poles and $L$ be an $L$-function in the Selberg class, such that they share one complex value counting multiplicity and their derivatives, $f'$ and $L'$ share zeroes up to an error term $o(r\log r)$,  then $f\equiv L$.
\end{corollary}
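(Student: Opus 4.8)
The plan is to obtain the corollary from Theorem~\ref{thm4} and then to remove the remaining scalar. First recall $\mathbb{S}\subseteq\mathbb{M}_1$: for $L$ in the Selberg class the functional equation together with Stirling's formula shows that in the left half-plane $|L(s)|$ is of order $(|s|+2)^{\mu_L^*(\sigma)}$, up to $\epsilon$, along suitable sequences avoiding the poles of the Gamma factors, which is precisely the condition defining $\mathbb{M}_1$. Since $f$ has order $\le 1$ and finitely many poles, shares a complex value $c$ with $L$ counting multiplicity, and $f',L'$ share zeroes up to an error $o(r\log r)$, Theorem~\ref{thm4} applies and gives $f=\mu L$ for a root of unity $\mu$. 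It remains to prove $\mu=1$; here we take the shared value $c$ to be non-zero (if $c=0$, the hypotheses are satisfied by $f=\mu L$ for every root of unity $\mu$, so one should read the statement with $c\neq 0$).

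To pin down $\mu$, note that since $f=\mu L$, a point is a $c$-point of $f$ exactly when it is a $(c/\mu)$-point of $L$; and since $f$ and $L$ share $c$ counting multiplicity, the $c$-points of $f$ are the $c$-points of $L$. Hence $L$ attains the values $c$ and $c/\mu$ on one and the same set of points, with the same multiplicities. If $\mu\neq 1$ then $c\neq c/\mu$ (using $c\neq 0$), and since $L$ cannot take two distinct values at one point, the common set must be empty: $L$ omits both $c$ and $c/\mu$. But then the Second Fundamental Theorem (Theorem~\ref{neva2}), applied to the three values $c,\ c/\mu,\ \infty$ and using that $L$ has at most one pole so that $\overline{N}(L,r)=O(\log r)$, yields $T(L,r)=O(\log r)$, contradicting Theorem~\ref{thm1}, which gives $T(L,r)=\Omega(r\log r)$. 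Therefore $\mu=1$ and $f\equiv L$.

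A variant finish, using the Selberg normalization directly, runs as follows: since $a_L(1)=1$ and the Dirichlet series converges absolutely for $\Re(s)>1$, one has $L(\sigma)\to 1$ as $\sigma\to+\infty$, hence $f(\sigma)=\mu L(\sigma)\to\mu$; choosing $s_0$ with $L(s_0)=c$ (possible, since $L$ does not omit $c$, by the argument above) and using the sharing, $c=f(s_0)=\mu c$, so $\mu=1$.

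I do not anticipate a serious obstacle: all of the analytic substance --- the lower bound $T(L,r)=\Omega(r\log r)$, the growth theory behind $\mathbb{M}_1$, and the Nevanlinna argument producing $f=\mu L$ --- already sits in Theorems~\ref{thm1} and~\ref{thm4}. The corollary is the short remark that, inside the Selberg class, the ambiguity by a root of unity is forced to be trivial; the only easy extra points are $\mathbb{S}\subseteq\mathbb{M}_1$ and that a function in $\mathbb{M}_1$ with finitely many poles and $T(\cdot,r)=\Omega(r\log r)$ omits at most one finite value.
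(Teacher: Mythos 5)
Your proposal is correct and takes essentially the same route as the paper, which derives the corollary simply by noting $\mathbb{S}\subseteq\mathbb{M}_1$ and invoking Theorem \ref{thm4}. You in fact supply more detail than the paper does: your elimination of the root of unity (if $\mu\neq 1$, the shared-value set forces $L$ to omit the two distinct finite values $c$ and $c/\mu$, which together with $\overline{N}(L,r)=O(\log r)$ and the Second Fundamental Theorem contradicts $T(L,r)=\Omega(r\log r)$ from Theorem \ref{thm1}) fills a step the paper leaves implicit, and your remark that the shared value must be taken nonzero (since for $c=0$ any $f=\mu L$ satisfies all hypotheses) is a genuine caveat that the stated corollary glosses over.
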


\begin{remark}\label{rem1}
We also prove Theorem \ref{thm1} without the assumption of the above stronger growth condition if one of the following holds.
\begin{enumerate}
    \item {\bf Distribution on vertical lines}: There exist $\sigma<0$ and $\epsilon>0$ such that
    $$
    \lim_{T\to\infty} \frac{1}{T} \hspace{2mm} meas \{|t|<T : |F(\sigma + it)| \gg T^{\epsilon} \} >0.
    $$
    \item {\bf Equidistribution of zeroes}: Let $N(L,0,T) = \Omega(f(T))$ for $T\gg 1$, then
    $$
    N(L,0,T+1) - N(L,0,T) = O(f(T)/T),
    $$
    where $N(L,c,T)$ is defined as \eqref{ten}.
    
\end{enumerate}
\end{remark}

Note that the Selberg class satisfies all the above conditions including the stronger growth condition because of the functional equation. We expect $\mathbb{M}_1 = \mathbb{M}$, but we do not have a proof.

\section{Preliminaries}\label{preliminaries}

Using Jensen's theorem, we know that
\begin{equation*}
    N(L,0,r) := \int_{0}^r \frac{n(L,0,t)}{t}dt =  \frac{1}{2\pi} \int_0^{2\pi} \log |L(\sigma_0 +1 +iT + re^{i\theta})|d\theta - \log |L(\sigma_0 + 1 + iT|,
\end{equation*}
which implies
\begin{equation*}
    N(L,0,r) = O(r\log r).
\end{equation*}

We show that for $F\in\mathbb{M}_1$ with $c_F^*>0$, the number of zeroes in the disc of radius $r$ is in fact $\Omega(r\log r)$.

\begin{prop}\label{prop5.1}
Let $F\in\mathbb{M}_1$ and $c_F^*>0$, then
\begin{equation*}
    N(F,0,r) = \Omega(r\log r)
\end{equation*}
\end{prop}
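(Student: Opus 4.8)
The plan is to pass from $F$ to an associated entire function, use Jensen's formula to convert the zero‑counting function into a circular mean of $\log|\cdot|$, bound that mean below by exploiting the growth built into the definition of $\mathbb{M}_1$, and then control the error coming from the proximity of $F$ to $0$.

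Let $k$ be the order of the pole of $F$ at $s=1$ and put $G(s)=(1-2^{1-s})^{k}F(s)$. Then $G$ is entire of order $\le 1$, $\mu_G^{*}=\mu_F^{*}$ by definition, and $|G(s)|\ge |F(s)|$ for $\Re(s)\le 0$ since there $|1-2^{1-s}|\ge 2^{1-\Re(s)}-1\ge 1$. The factor $(1-2^{1-s})^{k}$ has its zeros at $s=1+2\pi i m/\log 2$, so it contributes $O(r)$ to the counting function in $|s|<r$; hence $N(F,0,r)=N(G,0,r)+O(r)$ and it suffices to show $N(G,0,r)=\Omega(r\log r)$. Jensen's formula (correcting by $O(\log r)$ for the behaviour at the centre) gives
\begin{equation*}
N(G,0,r)=\frac{1}{2\pi}\int_{0}^{2\pi}\log|G(re^{i\theta})|\,d\theta+O(\log r)=m(G,r)-m(1/G,r)+O(\log r),
\end{equation*}
using $\log x=\log^{+}x-\log^{+}(1/x)$.

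Next I would bound $m(G,r)$ from below along a sequence. Unwinding the definition of $\mathbb{M}_1$ and using $c_F^{*}>0$ produces points $s_n$ with $\rho_n:=|s_n|\to\infty$ in the sector $|\arg s|>\pi/2+\delta$; there $\Re(s_n)<-\rho_n\sin\delta\to-\infty$, which forces $\mu_F^{*}(\Re(s_n))\gg \rho_n$ and hence $\log|F(s_n)|\gg \rho_n\log\rho_n$, and since $|1-2^{1-s_n}|^{k}\ge 1$ we get $\log|G(s_n)|\gg \rho_n\log\rho_n$. Applying the Poisson–Jensen formula in the disc $|s|<R$ with $R\in[2\rho_n,4\rho_n]$, discarding the nonnegative Blaschke terms and bounding the Poisson kernel by $\tfrac{R+\rho_n}{R-\rho_n}\le 3$, yields $\log|G(s_n)|\le 3\,m(G,R)$, so $m(G,R)\gg \rho_n\log\rho_n\gg R\log R$. (This already gives Theorem \ref{thm1}, since $T(G,r)\ge N(G,0,r)+O(1)$ would instead follow from the present proposition; here we get it directly from $T(G,r)=m(G,r)$.)

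It remains to show that, for a good choice of $R$ in the above range, $m(1/G,R)=o(R\log R)$, equivalently $m(1/G,R)=o(m(G,R))$; the displayed identity then gives $N(G,0,R)\gg R\log R$ along the corresponding sequence. One writes $m(1/G,R)=\frac{1}{2\pi}\int_{0}^{2\pi}\log^{+}\tfrac{1}{|G(Re^{i\theta})|}\,d\theta$ and splits the circle: on $\{\Re s\ge \sigma_2\}$ absolute convergence of the Dirichlet series keeps $|F|$, hence $|G|$, bounded below, so the integrand is $O(1)$; on $\{\Re s<0\}$ one has $\log^{+}\tfrac1{|G|}\le \log^{+}\tfrac1{|F|}$, which is supported in small neighbourhoods of the zeros of $F$; and on the thin transition strip $\{0\le\Re s\le\sigma_2\}$ (angular width $O(1/R)$) the polynomial bound $\mu_F=O(1)$ controls the integrand away from zeros. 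Since $n(G,0,cR)=O(R\log R)$ by the elementary Jensen upper bound, one may pick $R\in[2\rho_n,4\rho_n]$ so that $|s|=R$ meets only $O(\log R)$ zeros of $G$ within bounded distance; each such zero contributes $O(1)$ to the integral by a local Jensen estimate, and the distant zeros contribute negligibly.

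The hard part is exactly this last step: guaranteeing that $m(1/G,R)$ does not absorb the $\Omega(R\log R)$ growth of $m(G,R)$. For a general entire function of order $1$ this can fail — e.g. $e^{-s}$ has $m(e^{-s},r)=m(e^{s},r)$ and no zeros at all — so one must use the two structural features of $\mathbb{M}_1$ essentially. The Dirichlet series representation pins $F$ down as bounded above and below in a right half‑plane, so the growth of $m(G,R)$ cannot be balanced by decay on the right; and the sector condition $|\arg s|>\pi/2+\delta$ — reflecting that $\mu_F^{*}$ ignores behaviour near the real axis — ensures the growth driving $m(G,R)$ comes genuinely from the left, where $\log^{+}\tfrac1{|G|}$ is confined to neighbourhoods of the zeros of $F$ rather than spread over the circle. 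Making the near‑zero and transition‑strip estimates rigorous — in particular handling possibly clustered zeros through the choice of $R$, and ruling out pathologically small leading Taylor coefficients — is the technical heart of the proof.
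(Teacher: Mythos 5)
Your strategy differs from the paper's: you reduce via Jensen's formula to $N(G,0,R)=m(G,R)-m(1/G,R)+O(\log R)$, get $m(G,R)\gg R\log R$ along a sequence from the $\mathbb{M}_1$ condition, and then need $m(1/G,R)=o(R\log R)$. That last step is a genuine gap, not a technicality. Indeed, by the first fundamental theorem (equivalently, Jensen again) $m(1/G,R)=m(G,R)-N(G,0,R)+O(1)$, so the estimate $m(1/G,R)=o(R\log R)$ is \emph{equivalent} to the conclusion you are trying to prove, given your lower bound on $m(G,R)$; the circle-splitting sketch does not supply it. On the left half of the circle you assert that $\log^{+}(1/|G|)$ is confined to small neighbourhoods of zeros, each contributing $O(1)$ once $R$ is chosen so that only $O(\log R)$ zeros lie near the circle; but for an entire function of order $1$ the available minimum-modulus/Cartan-type estimates only give $\log|G|\gg -T(G,2R)\gg -R\log R$ off exceptional discs, which is exactly the size you must beat, and the issue you flag as ``ruling out pathologically small leading Taylor coefficients'' is precisely the missing argument. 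You concede this yourself (``the technical heart of the proof''), so as written the proposal establishes a reduction but not the proposition. (A minor point: boundedness below of $F$ in a right half-plane needs $a_F(1)\neq 0$; in general $|F(\sigma+it)|\asymp n_0^{-\sigma}$ for the first nonvanishing coefficient, though that still contributes only $O(R)$.)

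The paper's proof is arranged so that no lower bound on $|G|$ on circles is ever needed: it writes $G=(s-1)^kF$ via Hadamard factorization and invokes the Goldberg--Ostrovskii upper bound for genus-one canonical products, $\log|H(s)|\ll |s|\int_0^{|s|}n(t,0,H)t^{-2}\,dt+\cdots$, so that the hypothesis $N(F,0,r)=o(r\log r)$ directly forces the pointwise upper bound $\log|G(s)|=o(|s|\log|s|)$, which contradicts the $\Omega(|s|\log|s|)$ lower bound at the points in the sector furnished by the $\mathbb{M}_1$ condition together with $c_F^{*}>0$. To salvage your route you would either have to import such a canonical-product bound (at which point you are reproducing the paper's argument) or prove a minimum-modulus estimate of strength $o(R\log R)$ on suitably chosen circles, which is not available off the shelf; alternatively, note that your Poisson--Jensen lower bound for $m(G,R)$ is sound and coincides in spirit with how the paper exploits the $\mathbb{M}_1$ hypothesis, so only this one step separates your outline from a complete proof.
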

\begin{proof}
If $F$ has a pole of order $k$ at $s=1$, we define
\begin{equation*}
    G(s):= (s-1)^k F(s).
\end{equation*}
Note that $G(s)$ is entire and also satisfies the growth conditions of $F$. By Hadamard product factorization, we have
\begin{equation*}
    G(s) = s^m e^{As+B} \prod_{\rho} \left( 1 - \frac{s}{\rho} \right) e^{s/\rho},
\end{equation*}
where $\rho$ runs over the zeros of $G$ and $m,A,B$ are constants. We use the following result (see \cite{Gol}, p. 56, Remark 1), which states that if 
\begin{equation*}
    H(s) = \prod_{\rho} \left( 1 - \frac{s}{\rho} \right) e^{s/\rho}
\end{equation*}

and $\sum_{\rho} 1/|\rho|^2$ is bounded, then 
\begin{equation}\label{bound}
    \log |H(s)| \ll |s| \int_0^{|s|} \frac{n(t,0,H)}{t^2} dt + |s|^2\int_{|s|^2}^{\infty} \frac{n(t,0,H)}{t^3} dt.
\end{equation}
Recall that, by Jensen's theorem we have $N(T+1,0,G)- N(T,0,G) = O(\log T)$. Hence,
\begin{equation*}
    \sum_{G(\rho)=0} \frac{1}{|\rho|^2}
\end{equation*}
is bounded. Applying \eqref{bound} to $G$, we have
\begin{equation}\label{two}
    \log |G(s)| \ll |s| \int_0^{|s|} \frac{n(t,0,G)}{t^2} dt + |s|^2\int_{|s|^2}^{\infty} \frac{n(t,0,G)}{t^3} dt + O(|s|).
\end{equation}
If we assume that $N(T,0,F) = o(T\log T)$, then we show that RHS of \eqref{two} is $o(T\log T)$. To see this, we first show that if $N(T,0,G)=o(T\log T)$, then $n(T,0,G) = o(T\log T)$. Suppose $n(T,0,G)$ is not $o(T\log T)$, then there exists infinitely many $T$ such that $n(T,0,G)\ll T\log T$. But, that implies
\begin{equation*}
    N(2T,0,G)=\int_{0}^{2T}\frac{n(t,0,G)}{t} dt\geq \int_{T}^{2T} \frac{n(t,0,G)}{t} dt \gg T\log T,
\end{equation*}
which contradicts the assumption that $N(T,0,F) = o(T\log T)$. Therefore, it follows that the RHS of \eqref{two} is $o(T\log T)$.

But the strong growth condition implies that we can find $s$, with $|s|$ arbitrarily large such that the LHS of \eqref{two} is $\Omega(T\log T)$. This leads to a contradiction.
\end{proof}
We now show that the above proposition can be realized by dropping the strong growth condition under some distribution assumptions (see Remark \ref{rem1}). 
\begin{prop}\label{prop5.2}
If $F\in\mathbb{M}$ satisfies one of the following conditions,
\begin{enumerate}
    \item there exist $\sigma<0$ and $\epsilon>0$ such that
    $$
    \lim_{T\to\infty} \frac{1}{T} \hspace{2mm} meas \{|t|<T : |F(\sigma + it)| \gg T^{\epsilon} \} >0, \text{ or}
    $$
    \item if $N(L,0,T) = \Omega(f(T))$ for $T\gg 1$, then
    $$
    N(L,0,T+1) - N(L,0,T) = O(f(T)/T),
    $$
    then $N(T,0,F) = \Omega(T\log T)$.
\end{enumerate}
\end{prop}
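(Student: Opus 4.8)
The plan is to argue by contradiction in both cases. Suppose $N(T,0,F)$ is not $\Omega(T\log T)$, i.e.\ $N(T,0,F)=o(T\log T)$; as in Proposition~\ref{prop5.1} we include $c_F^*>0$ among the hypotheses. Exactly as there, set $G(s)=(s-1)^kF(s)$ with $k$ the order of the pole at $s=1$, so that $G$ is entire of order $\le 1$, $N(T,0,G)=N(T,0,F)+O(1)$ and $\log|F(s)|=\log|G(s)|+O(\log(|s|+2))$; and run the monotonicity argument of Proposition~\ref{prop5.1} in the contrapositive to deduce $n(T,0,F)=o(T\log T)$. Consequently, for any fixed $u$ the count $\nu(u,T)$ of zeros $\rho$ of $F$ with $u<\Re\rho$ and $|\Im\rho|<T$ satisfies $\nu(u,T)\le n(2T,0,F)=o(T\log T)$, uniformly for $u$ in a fixed bounded range.

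For case~(1), fix $\beta>1$ large enough that $F(\beta+it)\to a_F(1)\ne 0$ (so that $\int_{-T}^{T}\log|F(\beta+it)|\,dt=O(T)$), and let $\sigma<0$, $\epsilon>0$ be as in the hypothesis. Apply Littlewood's lemma to $F$ on the rectangle with vertical sides $\Re s=\sigma$, $\Re s=\beta$ and horizontal sides $\Im s=\pm T$, the height $T$ being chosen (from a set of full density) to avoid the ordinates of zeros:
\begin{equation*}
2\pi\int_\sigma^\beta\nu(u,T)\,du=\int_{-T}^{T}\log|F(\sigma+it)|\,dt-\int_{-T}^{T}\log|F(\beta+it)|\,dt+O\!\left(\max_{\sigma\le u\le\beta}\bigl|\arg F(u\pm iT)\bigr|\right).
\end{equation*}
The left side is $\le(\beta-\sigma)\,\nu(\sigma,T)=o(T\log T)$, the $\beta$-integral is $O(T)$, and the argument term is $O(\log T)$ after the choice of $T$; since $\nu\ge 0$ this forces $\bigl|\int_{-T}^{T}\log|F(\sigma+it)|\,dt\bigr|=o(T\log T)$. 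On the other hand the hypothesis says $|F(\sigma+it)|\gg T^\epsilon$ on a set $B_T\subset(-T,T)$ with $|B_T|\gg T$, whence $\int_{B_T}\log|F(\sigma+it)|\,dt\gg T\log T$, while $\int_{-T}^{T}\log^-|F(\sigma+it)|\,dt=o(T\log T)$ because $\log^-|F|$ is large only near a zero of $F$ and only $o(T\log T)$ zeros lie within bounded distance of the line $\Re s=\sigma$. Combining, $\int_{-T}^{T}\log|F(\sigma+it)|\,dt\gg T\log T$, a contradiction.

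For case~(2), the idea is that equidistribution of zeros promotes the growth information $c_F^*>0$ to a statement of type~(1). Pick $\sigma<0$ with $\mu_F^*(\sigma)=:\lambda>0$ (possible since $c_F^*>0$); then $\log|F(\sigma+it)|\ge(\lambda-o(1))\log|t|$ along some sequence $t\to\infty$. Applying the hypothesis with $f(T)=N(T,0,F)$ gives $N(T+1,0,F)-N(T,0,F)=O(N(T,0,F)/T)$, i.e.\ $n(T,0,F)=O(N(T,0,F))$, so the zeros of $F$ are regularly spread; since $\log|F(\sigma+it)|$ can dip well below its maximal trend $\lambda\log|t|$ only in a neighbourhood of a zero, this regularity lets one conclude that $|F(\sigma+it)|\gg|t|^{\lambda/2}$ (say) on a subset of $(-T,T)$ of measure $\gg T$ --- that is, $F$ satisfies condition~(1) for this $\sigma$. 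One then invokes case~(1).

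I expect the main obstacle in case~(1) to be the pair of estimates bracketing Littlewood's lemma: the bound $O(\log T)$ for the horizontal-edge argument term (handled by the customary choice of the height $T$ avoiding zeros, using that $F$ has order $\le 1$ and $o(T\log T)$ zeros near height $T$) and, more substantially, the bound $\int_{-T}^{T}\log^-|F(\sigma+it)|\,dt=o(T\log T)$, i.e.\ that sparse zeros cannot depress $\log|F|$ too much in mean on the line $\Re s=\sigma$; both are standard Jensen / Poisson--Jensen estimates, but must be carried out in the generality of $\mathbb{M}_1$. In case~(2) the crux is the quantitative implication ``$n(T,0,F)=O(N(T,0,F))$ together with $\mu_F^*(\sigma)>0$ yields a positive-density lower bound for $|F|$ on $\Re s=\sigma$'', which is precisely where the equidistribution hypothesis is genuinely used.
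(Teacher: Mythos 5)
Your strategy (Littlewood's lemma on a rectangle, balancing the zero term against the mean of $\log|F|$ on the left edge) is genuinely different from the paper's, which argues locally at a single well-chosen height; but as written it has a gap at the decisive step of case (1). After Littlewood's lemma you only know that the signed integral $\int_{-T}^{T}\log|F(\sigma+it)|\,dt$ is $o(T\log T)$, and hypothesis (1) gives $\int_{-T}^{T}\log^{+}|F(\sigma+it)|\,dt\gg T\log T$; to reach a contradiction you must show that $\int_{-T}^{T}\log^{-}|F(\sigma+it)|\,dt$ is $o(T\log T)$, or at least has a constant smaller than the one produced by the hypothesis. Your justification --- ``$\log^{-}|F|$ is large only near a zero, and only $o(T\log T)$ zeros lie near the line'' --- is not a valid principle at this level of generality. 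Every Poisson--Jensen / Borel--Carath\'eodory / Landau-type lower bound for $\log|F(\sigma+it)|$ contains, besides the sum over nearby zeros, an error term of size a constant times $M(t)$, where $e^{M(t)}$ bounds $|F(s)/F(s_{0})|$ on the disc used; since $F$ genuinely grows like a power of $t$ in the strip (this is exactly what $c_{F}^{*}>0$ says), one has $M(t)\asymp\log t$ no matter how few zeros there are, and this term integrates to $C\,T\log T$ with $C$ comparable to $\mu_{F}(\sigma)$ times the width of the strip --- not $o(T\log T)$, and in no way dominated by the $\epsilon$ of hypothesis (1), which may be arbitrarily small. Smallness of $|F|$ on a left vertical line is not controlled by nearby zeros alone, so the contradiction does not close.

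Case (2) inherits the same problem in sharper form, and you flag it yourself: $c_{F}^{*}>0$ only gives $\log|F(\sigma+it)|\gg\log t$ along some sequence of heights, and the asserted promotion to largeness on a set of positive measure (``$\log|F|$ can dip below its maximal trend only near a zero'') is again the unproved principle above; $n(T,0,F)=O(N(T,0,F))$ does not by itself yield it. Note how the paper sidesteps both difficulties: assuming $N(T,0,F)=o(T\log T)$, condition (1) or (2) furnishes a single height $T$ at which simultaneously $\log|F(\sigma_{0}+iT)|\gg\log T$ and only $o(\log T)$ zeros lie within bounded distance of $2+iT$; then Landau's lemma for $L'/L$, Hadamard's three-circle theorem applied to $g'/g$ with $g=L\prod_{\rho}(s-\rho)^{-1}$, and integration of $g'/g$ along a horizontal segment force $\log|L(\sigma_{0}+iT)|=o(\log T)$, a pointwise contradiction that never requires an integrated lower bound for $\log|F|$ along an entire line. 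To salvage your route you would need precisely such a mean lower bound on $\Re s=\sigma$ with a constant beaten by $\epsilon$, which the hypotheses do not supply.
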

\begin{proof}
We invoke the following theorem of Landau (see \cite{Tit}, p. 56, sec. 3.9, Lemma $\alpha$)
\begin{theorem*}
If $f$ is holomorphic in $|s-s_0|\leq r$ and $|{f(s)}/{f(s_0)}| < e^M$ in $|s-s_0|\leq r$, then
\begin{equation}\label{lan}
    \left|\frac{f'}{f}(s) - \sum_{|s_0-\rho|<r/2} \frac{1}{(s-\rho)} \right| \leq \frac{4}{1-2\alpha^2} \frac{M}{r},
\end{equation}
where $|s-s_0| \leq \alpha r$ for any $\alpha<1/2$ and $\rho$ runs over the zeroes of $f$.
\end{theorem*}

If $L\in\mathbb{M}$, then from Jensen's theorem, we have
\begin{equation*}
    \frac{L'}{L} (s) = \sum_{|s_0-\rho|<r/2} \frac{1}{(s-\rho)} + O(\log t),
\end{equation*}
where $s=\sigma + it$.

Let $C_1, C_2, C_3$ be circles with center $2+iT$ and radius $2-2\sigma_0, 2-\sigma_0$ and $1/2$ respectively, where $\sigma_0<0$. From \eqref{lan}, we have for $s\in C_2$
\begin{equation*}
    \frac{L'}{L} (s) = \sum_{\rho\in C_1}  \frac{1}{(s-\rho)} + O(\log T).
\end{equation*}

Moreover, since $L$ has a Dirichlet series on $Re(s)>1$, we have that it is bounded above and below on $C_3$ and hence
\begin{equation*}
    \frac{L'}{L} (s) = O(1).
\end{equation*}

Define the function 
\begin{equation*}
    g(s) = L(s) \prod_{\rho \in C_1} \frac{1}{s-\rho}.
\end{equation*}

If $N(L,0,T) = o(T\log T)$, then the condition(2) implies that we have for $s\in C_3$
\begin{equation*}
    \left|\frac{g'}{g} (s) \right| = o(\log T).
\end{equation*}

Moreover, for $s\in C_2$, we have $\left| g'(s)/g(s) \right| = O(\log T)$. By Hadamard's three-circle theorem, we have for any circle $C_4$ with center $2+iT$ and radius $2-\sigma_0 + \delta$,
\begin{equation}\label{Hada}
    \frac{g'}{g} (s) \ll  (\log T)^{\alpha_1} o\left((\log T)^{\alpha_2}\right) = o(\log T),
\end{equation}
where $s\in C_4$, $\delta>0$, $0<\alpha_1,\alpha_2<1$ and $\alpha_1 + \alpha_2 =1$.

Now, consider the integral
\begin{multline}\label{one}
    \int_{\sigma_0 + 3\delta +iT}^{2+iT}\frac{g'}{g}(s) ds = \log L(2+iT) - \log L(\sigma_0 + 3\delta +iT) \\
    - \sum_{\rho\in C_1} \log L(2+iT-\rho) - \log L(\sigma_0 + 3\delta +iT-\rho).
\end{multline}

By \eqref{Hada}, LHS of \eqref{one} is $o(\log T)$. But, by the growth condition we can choose $T$ such that
\begin{equation*}
    \log |L(\sigma_0 + iT)| = \Omega(\log T).
\end{equation*}

Thus, the RHS of \eqref{one} is $\Omega (\log T)$, because all the terms except $\log |L(\sigma + iT)|$ is $o(\log T)$. This is a contradiction.

Instead, if we assume condition(1) and suppose that $N(L,0,T) = o(T\log T)$. We have
\begin{equation*}
    \lim_{T\to\infty} \frac{1}{T} \hspace{2mm} meas \{|t|<T : N(L,0,T+1) - N(L,0,T) = \Omega(\log T) \} =0.
\end{equation*}
But condition(1) implies that we can find $T$ and $\sigma_0$ such that $N(L,0,T+1) - N(L,0,T) = o(\log T)$ and $\log |L(\sigma_0 + iT)| = \Omega(\log T)$. Now, we follow the same argument as before.
\end{proof}

\section{Proof of the theorems}\label{proofs}
\subsection{Proof of Theorem \ref{thm1}}
\begin{proof}
We evaluate the Nevanlinna characteristics for $L$-functions in $\mathbb{M}$. For $L\in \mathbb{M}$, we have
\begin{equation*}
    m(L,r) = \frac{1}{2\pi} \int_0^{2\pi} \log^+ |L(re^{i\theta})| d\theta.
\end{equation*}

Since, $L(s)$ is bounded on $\sigma>1$, we have
\begin{equation*}
    \frac{1}{2\pi} \int_{\theta; r \cos\theta>1} \log^+ |L(re^{i\theta})| d\theta \ll 1.
\end{equation*}
Moreover, using the growth condition, we have
\begin{align*}
    \frac{1}{2\pi} \int_{\theta; r \cos\theta<1} \log^+ |L(re^{i\theta})| d\theta & \leq \frac{1}{2\pi} \int_{\theta; r \cos\theta<1} \mu_L(r\cos\theta) d\theta + O(r) \\
    & \leq  \frac{c_L}{\pi} r\log r + O(r).
\end{align*}
Thus, we conclude that
\begin{equation*}
    m(L,r) \leq \frac{c_L}{\pi} r\log r + O(r).
\end{equation*}
Moreover, since $L\in\mathbb{M}$ has only one possible pole at $s=1$, we have for $r>1$,
\begin{equation*}
    N(L,r) \leq \log r.
\end{equation*}
Hence,
\begin{equation*}
    T(L,r) \leq \frac{c_L}{\pi} r\log r + O(r).
\end{equation*}
In order to prove Theorem \ref{thm1}, we use Theorem \ref{neva}, namely,
\begin{equation*}
    T(L,0,r) = T(L,r) + O(1).
\end{equation*}
But, we also know that
\begin{equation*}
    N(L,0,r) \leq T(L,0,r).
\end{equation*}
Hence, from Proposition \ref{prop5.1} and \ref{prop5.2}, we have $N(L,0,r) = \Omega(r\log r)$. Therefore,
\begin{equation*}
    T(L,r) = \Omega( r\log r).
\end{equation*}
This proves Theorem \ref{thm1}.
\end{proof}
Note that, the best known result for the number of zeroes of a general Dirichlet series $F(s)$ with meromorphic continuation is due to Bombieri and Perelli \cite{Bom}, which states that
\begin{equation*}
    \lim\sup_{T\to\infty} \frac{N(T,0,L) + N(T,\infty,F)}{T^\delta} > 0,
\end{equation*}
for $\delta<1$. In our case, enforcing a stronger growth condition ensures the number of zeroes to be $\Omega(T\log T)$.

\subsection{Proof of Theorem \ref{thm2}}
\begin{proof}
Suppose $L_1, L_2 \in \mathbb{M}$ share one complex value $c$, CM. Since $L_1$ and $L_2$ have only one possible pole at $s=1$, we define $F$ as
\begin{equation*}
F:= \frac{L_1 - c}{L_2 - c} Q,
\end{equation*}
where $Q = (s-1)^k$ is a rational function such that $F$ has no poles or zeroes. Since, $L_1$ and $L_2$ have complex order $1$, we conclude that $F$ has order at most $1$ and hence is of the form
\begin{equation*}
    F(s) = e^{as +b}.
\end{equation*}
This immediately leads to $a=0$, since $L_1$ and $L_2$ are absolutely convergent on $Re(s)>1$ and taking $s\to\infty$, $L_1(s)$ and $L_2(s)$ approach their leading coefficient. This forces 
\begin{equation*}
    L_1(s) = a L_2(s) + b.
\end{equation*}
for some constants $a,b\in\mathbb{C}$. Moreover, since they share a $c$-value, $b= c-ac$.
\end{proof}

\subsection{Proof of Theorem \ref{thm3}}
\begin{proof}
We argue similarly as in \cite{Bao1}. Suppose $L\in\mathbb{M}$ and a meromorphic function $f$ share a complex value $a$ CM and another complex value $b$ IM with an error term up to $o(r\log r)$. Consider the auxiliary function
\begin{equation}\label{aux}
G:= \left(\frac{L'}{(L-a)(L-b)} - \frac{f'}{(f-a)(f-b)}\right) (f-L).
\end{equation}

We first claim that $N(r,G) = o(r\log r)$. The only poles of the function $G$ comes from the zeroes of denominators in \eqref{aux} and the poles of $f$. 

For any zero $z$ of $L-a$ and $f-a$, $L'/(L-a)(L-b)$ and $f'/(f-a)(f-b)$ have the same principal part in the Laurent expansion at $s=z$, because $L$ and $f$ share the value $a$ CM. Hence, every zero of $L-a$ is also a zero of $G$.

For zeroes of $L-b$ and $f-b$ in $|s|<r$, except for $o(r\log r)$ of them, $L'/(L-b)$ and $f'/(f-b)$ have a simple pole at those points which cancel with the zero of $(f-L)$. Thus, there are at most $o(r\log r)$ poles of $G$ in $|s|<r$ coming from the zeroes of $L-b$ and $f-b$.

Since $f$ has finitely many poles, we conclude that 
\begin{equation*}
    N(G,r) = o(r\log r).
\end{equation*}

Moreover, since $L-a$ and $f-a$ share zeroes with multiplicity, we have an entire function which neither has a pole nor a zero given by
\begin{equation*}
    F:= \frac{L-a}{f-a} Q,
\end{equation*}
where $Q$ is a rational function such that it cancels the poles of $f$. Hence, we have
\begin{equation*}
    F(s) = e^{g(s)}.
\end{equation*}
We prove that $g$ is at most a linear function. By Theorem \ref{neva2}, we have
\begin{align*}
    T(f,r) & < \overline{N}\left( \frac{1}{f-a},r\right) + \overline{N}\left( \frac{1}{f-b},r\right) + \overline{N}(f,r) + O(\log r)\\
    & = \overline{N}\left( \frac{1}{L-a},r\right) + \overline{N}\left( \frac{1}{L-b},r\right) + \overline{N}(f,r) + o(r\log r) \\
    & = O(r\log r).
\end{align*}

Hence, the complex order of $f$, given by
\begin{equation*}
    \rho(f) = \limsup_{r\to\infty} \frac{\log T(f,r)}{\log r} \leq 1. 
\end{equation*}

Thus, $f$ is of order at most $1$ and since $L$ is of order $1$, we conclude that $g$ is linear.\\

In order to prove Theorem \ref{thm3}, it suffices to show that $G\equiv 0$. We establish this by computing the Nevanlinna characteristic of $G$.

Since $g(s)$ is linear and $Q$ a rational function, $T(F,r) = O(r)$ and $T(Q,r) = O(\log r)$. We thus have
\begin{equation*}
    m\left(\frac{f-L}{L-a}, r\right) \leq T\left( \frac{f-L}{L-a}, r\right) = T\left( \frac{Q}{F}-1, r\right) \leq O(r).
\end{equation*}
Similarly,
\begin{equation*}
    m\left(\frac{f-L}{f-a}, r\right) \leq T\left( \frac{f-L}{f-a},r\right) = T\left(1-\frac{F}{Q}, r\right) \leq O(r).
\end{equation*}
Using the logarithmic derivative lemma \eqref{logl}, we have
\begin{equation*}
    m\left(\frac{f'}{f-b}, r\right) = O(\log r)\text{  and  }m\left( \frac{L'}{L-b}, r\right) = O(\log r).
\end{equation*}
Therefore, we conclude
\begin{equation*}
    m(G,r) = O(r).
\end{equation*}
Since, $N(G,r) = o(r\log r)$, we get
\begin{equation*}
    T(G,r) = o(r\log r).
\end{equation*}
From Theorem \ref{neva}, we have
\begin{equation*}
    T\bigg(\frac{1}{G}, r \bigg) = T(G, r) + O(1) = o(r\log r).
\end{equation*}
But, note that every zero of $G$ is also a zero of $L-a$. Therefore, 
\begin{equation*}
    N\bigg(\frac{1}{G},r \bigg) \geq N(L-a, 0, r) = \Omega(r\log r). 
\end{equation*}
This is a contradiction since
\begin{equation*}
    N\bigg(\frac{1}{G},r \bigg)\leq T\bigg(\frac{1}{G},r\bigg) = o(r\log r).
\end{equation*}
\end{proof}

\subsection{Proof of Theorem \ref{thm4}}
\begin{proof}
Suppose $f$ is a meromorphic function on $\mathbb{C}$ of order $\leq 1$, with finitely many poles and $L\in\mathbb{M}$ such that they share complex value $a$ counting multiplicity and their derivatives $f'$ and $L'$ share zeroes up to an error term $o(r\log r)$.

Since $L-a$ and $f-a$ share zeroes with multiplicity, we have an entire function which neither has zeroes nor poles given by
\begin{equation*}
    F := \frac{L-a}{f-a} Q,
\end{equation*}
where $Q$ is a rational function such that it cancels the poles of $f$. Hence, we have
\begin{equation*}
    F(s) = e^{g(s)}.
\end{equation*}
Since $f$ has complex order $\leq 1$, we get $g(s)$ is linear.\\

Consider the auxiliary function
\begin{equation}\label{aux2}
G(s) := \left( \frac{1}{(L-a)f'} - \frac{1}{(f-a)L'}\right)(f'-L')(f-L).
\end{equation}

Now, we do a similar analysis as in the proof of Theorem \ref{thm3}. We first claim that $N(G,r) = o(r\log r)$. The only poles of $G$ can arise from the zeroes of the denominator in \eqref{aux2}.

For any zero $z$ of $L-a$ and $f-a$, we have $L'/(L-a) L'f'$ and $f'/(f-a)f'L'$ have the same principal part in the Laurent expansion at $s=z$, because $L$ and $f$ share the value $a$ CM. Hence, every zero of $L-a$ is also a zero of $G$.

For zeroes of $L'$ and $f'$ in $|s|<r$, except for $o(r\log r)$ of them, they are also zeroes $f'-L'$ of same multiplicity. Thus, there are at most $o(r\log r)$ poles of $G$ in $|s|<r$ coming from the zeroes of $L'$ and $f'$.

Since, $f$ has finitely many poles, so does $f'$ and hence we conclude
\begin{equation*}
    N(G,r) = o(r\log r).
\end{equation*}

In order to prove Theorem \ref{thm4}, it suffices to show that $G\equiv 0$. We establish this by computing the Nevanlinna characteristic of $G$.

Since $g(s)$ is linear and $Q$ a rational function, $T(F,r) = O(r)$ and $T(Q,r) = O(\log r)$. We thus have
\begin{equation*}
    m\bigg(\frac{f-L}{L-a},r\bigg) \leq T\bigg( \frac{f-L}{L-a},r\bigg) = T\bigg( \frac{Q}{F}-1,r\bigg) \leq O(r).
\end{equation*}
Similarly,
\begin{equation*}
    m\bigg(\frac{f-L}{f-a},r\bigg) \leq T\bigg( \frac{f-L}{f-a},r\bigg) = T\bigg( 1-\frac{F}{Q},r\bigg) \leq O(r).
\end{equation*}

Note that
\begin{equation*}
    \frac{f'-L'}{f'} = 1- \frac{L'}{f'}.
\end{equation*}
Since $L(s) -a = (f(s)-a) \frac{e^{g(s)}}{Q}$, taking derivatives we get
\begin{equation*}
    L'(s) = f'(s) \frac{e^{g(s)}}{Q} + (f(s)-a) \left(\frac{e^{g(s)}}{Q}\right)'.
\end{equation*}
Dividing by $f'(s)$, we have
\begin{equation*}
    \frac{L'}{f'}(s) = \frac{e^{g(s)}}{Q} + \frac{(f(s)-a)}{f'} \left(\frac{e^{g(s)}}{Q}\right)'.
\end{equation*}
We calculate the proximity function for the right hand side.
\begin{equation*}
    m\bigg(\frac{e^{g(s)}}{Q},r\bigg) \leq m( e^{g(s)},r) + m(Q,r) = O(r),
\end{equation*}
\begin{equation*}
    m\left(\frac{(f(s)-a)}{f'} \left(\frac{e^{g(s)}}{Q}\right)',r\right) \leq m\left(\frac{(f(s)-a)}{f'},r\right) + m\left( \left(\frac{e^{g(s)}}{Q}\right)',r\right).
\end{equation*}
Using logarithmic derivative lemma \eqref{logl}, we get
\begin{equation*}
    m\left(\frac{(f(s)-a)}{f'} \left(\frac{e^{g(s)}}{Q}\right)',r\right) = O(r).
\end{equation*}

Therefore, we have
\begin{equation*}
    m\left(\frac{f'-L'}{f'},r\right) = O(r).
\end{equation*}

Similarly, we also get

\begin{equation*}
    m\left(\frac{f'-L'}{L'},r\right) = O(r).
\end{equation*}

Hence, we conclude
\begin{equation*}
    T(G,r) = o(r\log r).
\end{equation*}
Now, we again proceed as in proof of Theorem \ref{thm3}. By Theorem \ref{neva}, we have

\begin{equation*}
    T\bigg(\frac{1}{G},r \bigg) = T(G,r) + O(1).
\end{equation*}

Moreover, every zero of $L-a$ is also a zero of $G$. Hence,

\begin{equation*}
N\bigg(\frac{1}{G},r \bigg)\geq N(L-a,0,r) = \Omega(r\log r).
\end{equation*}
This contradicts the fact that
\begin{equation*}
    N\bigg(\frac{1}{G},r \bigg) \leq T\bigg( \frac{1}{G},r \bigg) = o(r\log r).
\end{equation*}

\end{proof}

\section{Acknowledgements}
I would like to extend my gratitude to Prof. V. Kumar Murty for his guidance and insightful comments.


\begin{thebibliography}{0}
\bibitem{Bom} E. Bombieri and A. Perelli, Zeros and poles of Dirichlet series, {\it Rend. Mat. Acc. Lincei} {\bf 12}, pp. 69-73, (2001).
\bibitem{abd} A.B. Dixit and V.K. Murty, The Lindel\"of Class of $L$-functions (II), pre-print.
\bibitem{Gol} A.A. Goldberg and I.V. Ostrovskii, Value Distribution of Meromorphic Functions, {\it American Mathematical Society}, Provindence, (2008).
\bibitem{Hay} W.H. Hayman, Meromorphic Functions, {\it Oxford University Press}, (1964).
\bibitem{Bao1} Bao Qin Li, A result on value distribution of $L$-functions, {\it Proceedings of the American Mathematical Society}, vol. 138, pp. 2071-2077, (2010).
\bibitem{Bao2} Bao Qin Li, A uniqueness theorem for Dirichlet series satisfying Riemann type functional equation, {\it Advances in Mathematics}, {\bf 226}, pp. 4198-4211, (2011).
\bibitem{Rm} M.R. Murty and V.K. Murty, Strong multiplicity one for Serlberg's class, {\it C.R. Acad. Sci. Paris Ser, I Math.} {\bf 319} (1994), pp 315-320.
\bibitem{Km} V. Kumar Murty,  The Lindel\"of Class of $L$-functions,
 pp. 165-174, eds. L. Weng and M. Kaneko, {\it World Scientific}, (2007).
\bibitem{Nev} R. Nevanlinna, Einige Eindeutigkeitss\"atze in der Theorie der Meromorphen Funktionen, no. 3-4, pp. 367-391, {\it Acta Math.}{\bf 48}, 1926.
\bibitem{Slb} A. Selberg, ~ Old and new conjectures and results about a class of Dirichlet series, {\it Collected Papers,} vol. 2, pp. 47-63, {\it Springer-Verlag, Berlin Heidelberg New York}, 1991.
\bibitem{Std1} J.Steuding, On the value-distribution of $L$-functions, {\it Fiz. Mat. Fak. Moksl. Semin, Darb,} {\bf 6} (2003), pp 87-119.
\bibitem{Std2} J. Steuding, How many values can $L$-functions share?, {\it Fizikos ir mathematikos fakulteto}, {\bf 7}, pp. 70-81, (2004).
\bibitem{Tit} E.C. Titchmarsh, The theory of Riemann-Zeta function, Second edition, revised by D.R. Heath-Brown, {\it Clarendon Press-Oxford}, (1986).

\end{thebibliography}
\end{document}